\newtheorem{theorem}{Theorem}[section]
\newtheorem{lemma}{Lemma}[section]
\newtheorem{proposition}{Proposition}[section]
\newtheorem{corollary}{Corollary}[section]
\theoremstyle{definition}
\begin{document}
%% Setting up the EQUATION numbering system
\numberwithin{equation}{section}
%%%%%%%%%%%%
%%Setting up the TITLE and AUTHOR
\title[Fading absorption]{Fading absorption in non-linear elliptic equations}
\author{Moshe Marcus }
\thanks{This research was supported by The Israel Science Foundation grant No. 91/10}
\address{Department of Mathematics, Technion
Haifa, ISRAEL}
\email{marcusm@math.technion.ac.il}

%%%%%%%
\author{ Andrey Shishkov}
\address{Institute of Appl. Math. and  Mech., NAS of Ukraine, Donetsk, Ukraine}
\email{shishkov@iamm.ac.donetsk.ua}
\date{\today}
%\date{November 27, 2000}

%%%%%%%%%%%
%% FONT commands
\newcommand{\txt}[1]{\;\text{ #1 }\;}%% Used in math only
\newcommand{\tbf}{\textbf}%% Bold face. Usage: \tbf{...}
\newcommand{\tit}{\textit}%% Italic
\newcommand{\tsc}{\textsc}%% Small caps
\newcommand{\trm}{\textrm}
\newcommand{\mbf}{\mathbf}%% Math bold
\newcommand{\mrm}{\mathrm}%% Math Roman
\newcommand{\bsym}{\boldsymbol}%% Bold math symbol
%%Macros for changing font size in math.
\newcommand{\scs}{\scriptstyle}%% as in subscript
\newcommand{\sss}{\scriptscriptstyle}%% as in sub-subscript
\newcommand{\txts}{\textstyle}
\newcommand{\dsps}{\displaystyle}
%%Macros for changing font size in text.
\newcommand{\fnz}{\footnotesize}
\newcommand{\scz}{\scriptsize}
%%\tiny<\scz<\fsz<\small<\large<\Large<\huge<\Huge
%%%%%%%%%%%%
%%%%%%%%%%%%
%% EQUATION commands
\newcommand{\be}{\begin{equation}}
\newcommand{\bel}[1]{\begin{equation}\label{#1}}
\newcommand{\ee}{\end{equation}}%% This macro does not work with amstex.
%\newcommand{\eqnl}[2]{\begin{equation}\label{#1}{#2}\end{equation}}%%use not advisable; confusing
%%%%%%%%%%%%%%%
%% Unnumbered THEOREM env.
%% New env. to be used for unnumbered theorem, lemma etc. (but with specified name)
\newtheorem{subn}{\name}
\renewcommand{\thesubn}{}
\newcommand{\bsn}[1]{\def\name{#1$\!\!$}\begin{subn}}
\newcommand{\esn}{\end{subn}}
%%%%%%%%%%%%%%
%% NUMBERED THEOREM env.
%% Environments: theorem, lemma, corollary defintion and related commands,
%% designed to provide consecutive numbering of these forms.
\newtheorem{sub}{\name}[section]
\newcommand{\dn}[1]{\def\name{#1}}   %used in conjuction with sub or subn.
\newcommand{\bs}{\begin{sub}}
\newcommand{\es}{\end{sub}}
\newcommand{\bsl}[1]{\begin{sub}\label{#1}}
%% the above must be preceeded by \dn (name definition),
%% however this is superceded by the list of commands bth etc.  below.
%%%%%%%%%%%%
%% NUMBERED THEOREM env. (cont.)
%% List of commands derived from 'sub' env. for theorem, lemma etc.
%% designed to provide consecutive numbering of these forms.
\newcommand{\bth}[1]{\def\name{Theorem}\begin{sub}\label{t:#1}}
\newcommand{\blemma}[1]{\def\name{Lemma}\begin{sub}\label{l:#1}}
\newcommand{\bcor}[1]{\def\name{Corollary}\begin{sub}\label{c:#1}}
\newcommand{\bdef}[1]{\def\name{Definition}\begin{sub}\label{d:#1}}
\newcommand{\bprop}[1]{\def\name{Proposition}\begin{sub}\label{p:#1}}
\newcommand{\bnote}[1]{\def\name{\mdseries\scshape Notation}\begin{sub}\label{n:#1}}
\newcommand{\bproof}{\begin{proof}}
\newcommand{\eproof}{\end{proof}}
\newcommand{\bcom}{\begin{comment}}
\newcommand{\ecom}{\end{comment}}
%%%%%%%%%%%%%%%%%%%%%%%%%%%%%%%%%%
%% RERERENCE commands.
%% \newcommand{\R}[1]{(\ref{#1})}
\newcommand{\req}{\eqref}
\newcommand{\rth}[1]{Theorem~\ref{t:#1}}
\newcommand{\rlemma}[1]{Lemma~\ref{l:#1}}
\newcommand{\rcor}[1]{Corollary~\ref{c:#1}}
\newcommand{\rdef}[1]{Definition~\ref{d:#1}}
\newcommand{\rprop}[1]{Proposition~\ref{p:#1}}
\newcommand{\rnote}[1]{Notation~\ref{n:#1}}
%%%%%%%%%%%
%% ARRAY commands.
\newcommand{\BA}{\begin{array}}
\newcommand{\EA}{\end{array}}
\newcommand{\BAN}{\renewcommand{\arraystretch}{1.2}
\setlength{\arraycolsep}{2pt}\begin{array}}
\newcommand{\BAV}[2]{\renewcommand{\arraystretch}{#1}
\setlength{\arraycolsep}{#2}\begin{array}}
%Note: The first variable gives the amount of stretching: (#1) x default.
%For instance #1=1.2 means a 20% stretching. The second variable should be
%written for instance in the form  4pt ; here the default is 5pt
%\newcommand{\EAN}{\end{array}\setlength{\arraycolsep}{5pt}}
\newcommand{\BSA}{\begin{subarray}}
\newcommand{\ESA}{\end{subarray}}
%Note: These are used in subscripts as well as superscripts. They work essentially
%% like 'array'.
\newcommand{\BAL}{\begin{aligned}}
\newcommand{\EAL}{\end{aligned}}
\newcommand{\BALG}{\begin{alignat}}
\newcommand{\EALG}{\end{alignat}}%% the abbrev. does not work with latex2e
\newcommand{\BALGN}{\begin{alignat*}}
\newcommand{\EALGN}{\end{alignat*}}%% the abbrev. does not work with latex2e
%% The 'aligned' environment must be placed inside an 'equation' env.
%% in the same way as the array.
%% One could use also the 'align' env. or the 'alignat' env.
%% However in this case each line is numbered, unless '\notag' is used.
%% The 'alignat'
%% has a slightly different format (the number of columns must be specified in advance)
%% but it has the advantage that the distance between columns is at our disposition.
%% (The default would be zero distance.) Using 'alignat*' we can have the advantages
%% of alignat plus the situation where separate lines are not numbered.
%% However in this case there is no numbering at all (unless we provide a tag).
%%%%%%%%%%
%% PROOF, REMARK etc.
\newcommand{\note}[1]{\noindent\textit{#1.}\hspace{2mm}}
\newcommand{\Remark}{\note{Remark}}
%% MATHEMATICAL symbols
\newcommand{\forevery}{\quad \forall}
\newcommand{\1}{\\[1mm]}
\newcommand{\2}{\\[2mm]}
\newcommand{\3}{\\[3mm]}
\newcommand{\set}[1]{\{#1\}}
\def\({{\rm (}}
\def\){{\rm )}}
\newcommand{\st}[1]{{\rm (#1)}}
%\newcommand{\setdef}[2]{\{\,#1:\,#2\,\}}
%\newcommand{\setm}[2]{\{\,#1\mid #2\,\}}
%% Arrows
\newcommand{\lra}{\longrightarrow}
\newcommand{\lla}{\longleftarrow}
\newcommand{\llra}{\longleftrightarrow}
\newcommand{\Lra}{\Longrightarrow}
\newcommand{\Lla}{\Longleftarrow}
\newcommand{\Llra}{\Longleftrightarrow}
\newcommand{\warrow}{\rightharpoonup}
\def\dar{\downarrow}
\def\uar{\uparrow}
%% Brackets, delimiters
\newcommand{\paran}[1]{\left (#1 \right )}%% adjustable parantheses
\newcommand{\sqrbr}[1]{\left [#1 \right ]}%% adjustable square brackets
\newcommand{\curlybr}[1]{\left \{#1 \right \}}%% adjustable curly brackets
\newcommand{\absol}[1]{\left |#1\right |}%% adjustable vertical delimiters
\newcommand{\norm}[1]{\left \|#1\right \|}%% adjustable norm
\newcommand{\angbr}[1]{\left< #1\right>}%% adjustable angle bracket
%%%%
\newcommand{\paranb}[1]{\big (#1 \big )}%% non-adjustable parantheses (big)
\newcommand{\sqrbrb}[1]{\big [#1 \big ]}%% non-adjustable square brackets (big)
\newcommand{\curlybrb}[1]{\big \{#1 \big \}}%% non-adjustable curly brackets (big)
\newcommand{\absolb}[1]{\big |#1\big |}%% non-adjustable vertical delimiters (big)
\newcommand{\normb}[1]{\big \|#1\big \|}%% non-adjustable norm (big)
\newcommand{\angbrb}[1]{\big\langle #1 \big \rangle}%% nonadjustable angle bracket (big)
%%%%%%%%%%%%%%%%%
%% Adjustable parantheses etc. in a different DEFINITION format.
%\def\adp(#1){\left (#1 \right )}%% adjustable parantheses
%\def\adsb(#1){\left [#1\right ]}%% adjustable square brackets
%\def\adcb(#1){\left \{#1\right \}}%% adjustable curly brackets
%\def\absol|#1|{\left |#1\right |}%% adjustable vertical delimiters
%%%%%%%%%%%%%%%%
%% More mathematical symbols
\newcommand{\thkl}{\rule[-.5mm]{.3mm}{3mm}}
\newcommand{\thknorm}[1]{\thkl #1 \thkl\,}
\newcommand{\trinorm}[1]{|\!|\!| #1 |\!|\!|\,}
\newcommand{\vstrut}[1]{\rule{0mm}{#1}}
\newcommand{\rec}[1]{\frac{1}{#1}}
%% OPERATOR names.
\newcommand{\opname}[1]{\mathrm{#1}\,}
\newcommand{\supp}{\opname{supp}}
\newcommand{\dist}{\opname{dist}}
\newcommand{\sign}{\opname{sign}}
\newcommand{\diam}{\opname{diam}}
%%%%%%%%%%
%%%%%%% SPACE commands
\newcommand{\q}{\quad}
\newcommand{\qq}{\qquad}
\newcommand{\hsp}[1]{\hspace{#1mm}}
\newcommand{\vsp}[1]{\vspace{#1mm}}
%%%%%%%%%%%
%% ABREVIATIONS
%\newcommand{\iny}{\infty}
\newcommand{\prt}{\partial}
\newcommand{\sms}{\setminus}
\newcommand{\ems}{\emptyset}
\newcommand{\ti}{\times}
\newcommand{\pr}{^\prime}
\newcommand{\ppr}{^{\prime\prime}}
\newcommand{\tl}{\tilde}
\newcommand{\wtl}{\widetilde}
\newcommand{\sbs}{\subset}
\newcommand{\Sbs}{\Subset}
\newcommand{\sbeq}{\subseteq}
\newcommand{\ovl}{\overline}
\newcommand{\unl}{\underline}
\newcommand{\nin}{\not\in}
\newcommand{\pfrac}[2]{\genfrac{(}{)}{}{}{#1}{#2}}% frac with parantheses.
\newcommand{\tin}{\to\infty}
\newcommand{\ind}[1]{_{\scriptscriptstyle #1}}
\newcommand{\chr}[1]{\chi\ind{#1}}
\newcommand{\rest}[1]{\big |\ind{#1}}
\newcommand{\Sol}[2]{\mathrm{Sol}\ind{#2}{#1}}
%%%%%%%%%%%
%% ABREVIATIONS of common expressions
\newcommand{\wkc}{weak convergence\xspace}
\newcommand{\wrto}{with respect to\xspace}
\newcommand{\cons}{consequence\xspace}
\newcommand{\consy}{consequently\xspace}
\newcommand{\Consy}{Consequently\xspace}
\newcommand{\Essy}{Essentially\xspace}
\newcommand{\essy}{essentially\xspace}
\newcommand{\mnz}{minimizer\xspace}
\newcommand{\sth}{such that\xspace}
\newcommand{\ngh}{neighborhood\xspace}
\newcommand{\nghs}{neighborhoods\xspace}
\newcommand{\seq}{sequence\xspace}
\newcommand{\seqs}{sequences\xspace}
\newcommand{\sseq}{subsequence\xspace}
\newcommand{\ifif}{if and only if\xspace}
\newcommand{\suff}{sufficiently\xspace}
\newcommand{\abc}{absolutely continuous\xspace}
\newcommand{\sol}{solution\xspace}
\newcommand{\subss}{sub-solutions\xspace}
\newcommand{\subs}{sub-solution\xspace}
\newcommand{\supers}{super-solution\xspace}
\newcommand{\superss}{super-solutions  \xspace}
\newcommand{\Wlg}{Without loss of generality\xspace}
\newcommand{\wlg}{without loss of generality\xspace}
\newcommand{\locun}{locally uniformly\xspace}
\newcommand{\bvp}{boundary value problem\xspace}
\newcommand{\bvps}{boundary value problems\xspace}
%%%%%%%%%Specific for Present Paper
\newcommand{\bdw}{\partial\Gw}
\newcommand{\Capq}{C_{2/q,q'}}
\newcommand{\Wq}{W^{2/q,q'}}
\newcommand{\Wqdual}{W^{-2/q,q}}
\newcommand{\Wqdb}{W^{-2/q,q}_+(\bdw)}
\newcommand{\sbsq}{\overset{q}{\sbs}}
\newcommand{\app}[1]{\underset{#1}{\approx}}
\newcommand{\suppq}{\mathrm{supp}^q_{\bdw}\,}
\newcommand{\convq}{\overset{q}{\to}}
\newcommand{\barq}[1]{\bar{#1}^{^q}}
\newcommand{\prtq}{\partial_q}
\newcommand{\tr}{\mathrm{tr}\,}
\newcommand{\Tr}{\mathrm{Tr}\,}
\newcommand{\trin}[1]{\mathrm{tr}\ind{#1}}
\newcommand{\Ltrin}[1]{\text{\rm L-tr}\ind{#1}}
\newcommand{\Mtrin}[1]{\text{\rm M-tr}\ind{#1}}
\newcommand{\qcl}{$q$-closed\xspace}
\newcommand{\qop}{$q$-open\xspace}
\newcommand{\gsmod}{$\gs$-moderate\xspace}
\newcommand{\gsreg}{$\gs$-regular\xspace}
\newcommand{\qreg}{quasi regular\xspace}
\newcommand{\qeq}{$q$-equivalent\xspace}
\newcommand{\ppf}{\underset{f}{\prec\prec}}
\newcommand{\ofrown}{\overset{\frown}}
\newcommand{\modcon}{\underset{mod}{\lra}}
\newcommand{\ugb}[1]{u\chr{\Gs_\gb(#1)}}
\def\qsupp{q\text{-supp}\,}
\def\RN{\BBR^N}
\def\loc{\ind{\rm loc}}
\def\qloc{\ind{\ell(2,q')}}
%%%%%%%%%%%%%
\def\bcom{\begin{comment}}
\def\ecom{\end{comment}}
%%Macros for Greek letters.
\def\ga{\alpha}     \def\gb{\beta}       \def\gg{\gamma}
\def\gc{\chi}       \def\gd{\delta}      \def\ge{\epsilon}
\def\gth{\theta}                         \def\vge{\varepsilon}
\def\gf{\phi}       \def\vgf{\varphi}    \def\gh{\eta}
\def\gi{\iota}      \def\gk{\kappa}      \def\gl{\lambda}
\def\gm{\mu}        \def\gn{\nu}         \def\gp{\pi}
\def\vgp{\varpi}    \def\gr{\rho}        \def\vgr{\varrho}
\def\gs{\sigma}     \def\vgs{\varsigma}  \def\gt{\tau}
\def\gu{\upsilon}   \def\gv{\vartheta}   \def\gw{\omega}
\def\gx{\xi}        \def\gy{\psi}        \def\gz{\zeta}
\def\Gg{\Gamma}     \def\Gd{\Delta}      \def\Gf{\Phi}
\def\Gth{\Theta}
\def\Gl{\Lambda}    \def\Gs{\Sigma}      \def\Gp{\Pi}
\def\Gw{\Omega}     \def\Gx{\Xi}         \def\Gy{\Psi}

%%Macros for calligraphic letters.
\def\CS{{\mathcal S}}   \def\CM{{\mathcal M}}   \def\CN{{\mathcal N}}
\def\CR{{\mathcal R}}   \def\CO{{\mathcal O}}   \def\CP{{\mathcal P}}
\def\CA{{\mathcal A}}   \def\CB{{\mathcal B}}   \def\CC{{\mathcal C}}
\def\CD{{\mathcal D}}   \def\CE{{\mathcal E}}   \def\CF{{\mathcal F}}
\def\CG{{\mathcal G}}   \def\CH{{\mathcal H}}   \def\CI{{\mathcal I}}
\def\CJ{{\mathcal J}}   \def\CK{{\mathcal K}}   \def\CL{{\mathcal L}}
\def\CT{{\mathcal T}}   \def\CU{{\mathcal U}}   \def\CV{{\mathcal V}}
\def\CZ{{\mathcal Z}}   \def\CX{{\mathcal X}}   \def\CY{{\mathcal Y}}
\def\CW{{\mathcal W}}
%%%%%
%%Macros for 'blackboard' letters (See (27) for display.)
\def\BBA {\mathbb A}   \def\BBb {\mathbb B}    \def\BBC {\mathbb C}
\def\BBD {\mathbb D}   \def\BBE {\mathbb E}    \def\BBF {\mathbb F}
\def\BBG {\mathbb G}   \def\BBH {\mathbb H}    \def\BBI {\mathbb I}
\def\BBJ {\mathbb J}   \def\BBK {\mathbb K}    \def\BBL {\mathbb L}
\def\BBM {\mathbb M}   \def\BBN {\mathbb N}    \def\BBO {\mathbb O}
\def\BBP {\mathbb P}   \def\BBR {\mathbb R}    \def\BBS {\mathbb S}
\def\BBT {\mathbb T}   \def\BBU {\mathbb U}    \def\BBV {\mathbb V}
\def\BBW {\mathbb W}   \def\BBX {\mathbb X}    \def\BBY {\mathbb Y}
\def\BBZ {\mathbb Z}

%%Macros for Ghotic (Fraktur) letters.
\def\GTA {\mathfrak A}   \def\GTB {\mathfrak B}    \def\GTC {\mathfrak C}
\def\GTD {\mathfrak D}   \def\GTE {\mathfrak E}    \def\GTF {\mathfrak F}
\def\GTG {\mathfrak G}   \def\GTH {\mathfrak H}    \def\GTI {\mathfrak I}
\def\GTJ {\mathfrak J}   \def\GTK {\mathfrak K}    \def\GTL {\mathfrak L}
\def\GTM {\mathfrak M}   \def\GTN {\mathfrak N}    \def\GTO {\mathfrak O}
\def\GTP {\mathfrak P}   \def\GTR {\mathfrak R}    \def\GTS {\mathfrak S}
\def\GTT {\mathfrak T}   \def\GTU {\mathfrak U}    \def\GTV {\mathfrak V}
\def\GTW {\mathfrak W}   \def\GTX {\mathfrak X}    \def\GTY {\mathfrak Y}
\def\GTZ {\mathfrak Z}   \def\GTQ {\mathfrak Q}
\font\Sym= msam10
% special symbols
\def\SYM#1{\hbox{\Sym #1}}
%%%%%%%%%%%% Ad-hoc abbreviations
\def\bmn{\mathbf{n}}
\def\bma{\mathbf{a}}
\def\prtn{\prt_{\bmn}}

\def\txin{\txt{in}}
\def\txon{\txt{on}}
\def\C2q{C_{2,q'}}
\def\W2q{W^{2,q'}}
\def\w02q{W_{0,\infty}^{2,q'}(D)}
\def\qfine{\text{$\C2q$-finely}\xspace}
\def\qsupp{\mathrm{supp}\ind{(2,q')}}
\def\qcomp{\text{$\C2q$-pseudo compact}\xspace}
\def\qae{\text{$\C2q$ a.e.}\xspace}
\def\qinterior{\mathrm{int}_q}
\def\limsim{\overset{\scriptscriptstyle lim}{\sim}}
\def\limsbs{\overset{\scriptscriptstyle lim}{\subset}}
\def\qsim{\overset{\scriptscriptstyle \, q}{\sim}}
\def\qsbs{\overset{\scriptscriptstyle \, q}{\sbs}}
\def\zta{\gz_{\eta}}
\def\mq{2m/(q-1)}
\def\qtop{$\C2q$-fine topology\xspace}
\def\BPq{(B-P)$_q$\xspace}
\def\prtql{$\prt_q$-large\xspace}
\def\quasi{$\C2q$-quasi\xspace}
\maketitle
%%%%%%%%%%%%%%%%%%%
\begin{abstract}We study the equation $-\Gd u+h(x)|u|^{q-1}u=0$, $q>1$, in $\RN_+=\BBR^{N-1}\ti \BBR_+$
where $h\in C(\ovl{\RN_+})$, $h\geq 0$. Let $(x_1,\ldots, x_N)$ be a coordinate system \sth $\RN_+=[x_N>0]$ and denote a point $x\in \RN$ by $(x',x_N)$. Assume that $h(x', x_N)>0$ when $x'\neq 0$ but  $h(x',x_N)\to 0$ as $|x'|\to 0$. For this class of equations we obtain sharp necessary and sufficient conditions in order that singularities on the boundary do not propagate in the interior.
\end{abstract}
\section{Introduction}
\label{Introduction} \setcounter{equation}{0}
In this paper we
study solutions of the equation
\begin{equation}\label{eqh}
-\Gd u+h(x)|u|^{q-1}u=0,
\end{equation}
in $\RN_+=\BBR^{N-1}\ti \BBR_+$ where $q>1$
and $h\in C(\ovl{\RN_+})$, $h\geq 0$. (If $x\in \RN_+$ we write $x=(x',x_N)$ where $x'=(x_1,\ldots,x_{N-1})$.)

If $h>0$ in $\RN_+$ then boundary singularities of solutions of \req{eqh} cannot propagate to the interior. This is due to the presence of the absorption term  $h|u|^{q-1}u$ and the Keller -- Osserman estimates, \cite{Keller} and \cite{Oss}. In fact, in this case,
\req{eqh} possesses a maximal solution $U$ in $\RN_+$ and,
\begin{equation}\label{Utin}
\lim_{\BSA{c} x_N\to0\\|x|\leq M\ESA}U(x)=\infty \forevery M>0.
\end{equation}
A solution satisfying this boundary condition is called a \emph{large solution}. It is known that under these conditions the large solution is unique (see e.g. \cite{BanM}).

On the other hand, if $h$ vanishes on a set $F\sbs \RN_+$ which has limit points on $[x_N=0]$ then a singularity at these limit points may propagate to the interior. By this we mean that there may exist a \seq $\{u_n\}$ of solutions of \req{eqh} in $\RN_+$ which converges in
$$\Gw=\RN_+\sms F$$
but tends to infinity at some points of $F$.

In this paper we shall study the case where $h$ is positive in $\Gw$ but may vanish on
$$F=\{ (0,x_N)\in \RN_+:\,x_N>0\}.$$

Since $h$ is positive in $\RN_+\sms F$ a singularity at the origin may propagate only along the set $F$.
Furthermore a weak singularity, such as that of the Poisson kernel, cannot propagate to the interior because any solution of \req{eqh} is dominated by the harmonic function with the same boundary behavior. Therefore we must consider only strong singularities, i.e. singularities which cannot occur in the case of a harmonic function but may occur  \wrto solutions of \req{eqh}.

Suppose that
$$h(x',x_N)\leq h_0(|x'|),$$
where
$$h_0\in C^1[0,\infty),\q h_0(s)>0 \text{ for }s>0,\q h_0(0)=0.$$
It is clear that, the faster  $h_0(s)$ tends to zero as $s\to 0$ the greater the chance that a strong boundary singularity at the origin will propagate to the interior.

Our aim is to determine a sharp criterion for the propagation of singularities \wrto solutions of \req{eqh} with $h\in C(\ovl{\RN_+})$
\sth $h>0$ in $\RN_+\sms F$.
It turns out that such a criterion can be expressed in terms of functions of the form
\begin{equation}\label{h-exp}
\bar h(s):= e^{-\frac{\gw(s)}{s}}.
\end{equation}
We assume that $\gw$ satisfies the following conditions:
\begin{equation}\label{tech1}\BAL
(i)\q &\gw\in C(0,\infty)\q\text{is a positive nondecreasing function,}\\
(ii)\q & s\mapsto \mu(s):=\frac{\omega(s)}s\text{ is monotone decreasing on $\BBR_+$},\\
(iii)\q&  \lim_{s\to0}\mu(s)=\infty.
\EAL\end{equation}
We establish the following results.

\begin{theorem}\label{Th.1} Suppose that
\begin{equation}\label{h-1}
\liminf_{\BSA{c} x\to 0\\ x'\neq 0\ESA} h(x)/ \bar h(|x'|)>0
\end{equation}
where $\bar h$ is given by \req{h-exp} and  that \req{tech1} holds.

Suppose that $\gw$ satisfies the Dini condition,
\begin{equation}\label{Dini1}
\int_0^1 (\gw(t)/t)\, dt<\infty.
\end{equation}
If $\{u_n\}$ is a \seq of solutions of \req{eqh} in $\RN_+$ converging (pointwise) in
 $$\Gw=\RN_+\sms F$$
 then the \seq converges in $\RN_+$ and its limit is a solution of \req{eqh} in $\RN_+$.

In particular, \req{eqh} possesses a maximal solution $U$ in $\RN_+$.
\end{theorem}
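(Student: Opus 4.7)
The plan is to combine interior elliptic regularity in $\Gw=\RN_+\sms F$ with a local \supers argument near each point of $F$.

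First, since $h\in C(\ovl{\RN_+})$, interior Schauder estimates applied to \req{eqh} in the open set $\Gw$ upgrade the pointwise convergence of $\{u_n\}$ to $C^2\loc(\Gw)$-convergence; in particular $u:=\lim u_n$ solves \req{eqh} in $\Gw$ and $\{u_n\}$ is uniformly bounded on every compact $K\Sbs\Gw$. To obtain convergence on $\RN_+$, it then suffices to prove a uniform local bound for $\{u_n\}$ in a neighborhood of each point $P_0=(0,x_N^0)\in F$ with $x_N^0>0$: given such a bound, interior regularity yields a $C^2$-convergent \sseq on a neighborhood of $P_0$; its limit must agree with $u$ on the dense open subset $\Gw$, hence extends $u$ across $F$ near $P_0$. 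Since $P_0$ is arbitrary, this produces an extension of $u$ to a \sol on all of $\RN_+$, and uniqueness of the extension then forces the whole \seq to converge.

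The heart of the proof is the construction of this local bound via a radial \supers. Fix $P_0$ and a cylinder $Q=\{|x'|<\gr\}\ti(x_N^0-\gr,\,x_N^0+\gr)\Sbs\RN_+$, and set $M_0=\sup_n\sup_{\prt Q\sms F}u_n$, finite by the previous step. One seeks a profile $\Psi\in C([0,\gr])$ such that $W(x):=\Psi(|x'|)$ is a \supers of \req{eqh} in $Q$ that dominates $u_n$ on $\prt Q$. Using hypothesis \req{h-1}, this reduces to the one-dimensional inequality
\[
-\Psi''(r)-\frac{N-2}{r}\Psi'(r)+c\,e^{-\gw(r)/r}\Psi(r)^q\ge 0,\q r\in(0,\gr),
\]
with $\Psi(\gr)\ge M_0$ and $\Psi(0)<\infty$. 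The Dini condition \req{Dini1} is exactly what permits such a $\Psi$ to be produced: after an integration step a primitive of the form $\int_0^\gr\gw(s)/s\,ds$ appears, and its finiteness under \req{Dini1} is what keeps $\Psi$ bounded at $r=0$. The monotonicity of $\gw$ and $\gm$ from \req{tech1} is used to absorb the remaining error terms in the differential inequality. Handling the two ``corner'' points of $\prt Q\cap F$ (where $u_n$ might a priori be large) requires additional care---either by bounding $u_n$ there separately via a subharmonic mean-value estimate on $\prt B_\gr(P_0)$ (using that $F\cap\prt B_\gr(P_0)$ has zero surface measure when $N\ge 2$), or by locally modifying $\Psi$ near those points---and this combined construction is the main technical obstacle.

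Once $W$ dominates $u_n$ on $\prt Q$, the maximum principle yields $u_n\le W$ in $Q$, providing the required uniform local bound and completing the main convergence claim. Finally, the maximal solution $U$ is obtained by applying the above to the increasing \seq $\{u_M\}$ of solutions of \req{eqh} with boundary data $M$ on $\{x_N=0\}$ (constructed by the classical \subs/\supers method): the preceding argument yields the limit as a \sol on $\RN_+$, and maximality follows from the comparison principle.
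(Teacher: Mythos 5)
Your overall strategy — reduce to a local uniform bound near each point of $F$, obtain that bound from a \supers dominating $u_n$, invoke comparison and interior regularity — is a genuinely different route from the paper's. The paper does not construct a radial \supers; instead it builds a large solution $V^R$ in a ball $B_R$ centered at $(0,R)\in F$ via energy-integral estimates (Lemma \ref{L2.1}, Propositions \ref{L2.2}--\ref{L2.4}) combined with a delicate iterative scheme (Proposition \ref{main}) along a \seq $M_j=\exp(a^j)$, and the Dini condition enters only at the very end when summing the ``propagation lengths'' $\tau^{i,j}\lesssim \gw(s_i)$. Your route, if it worked, would be considerably shorter.

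However, there is a genuine gap, and it is located exactly where you flag ``the main technical obstacle,'' but I do not think the workarounds you suggest can close it. The quantity $M_0=\sup_n\sup_{\prt Q\sms F}u_n$ need not be finite: $\prt Q\sms F$ is not a compact subset of $\Gw$ (the two corner points of $\prt Q\cap F$ are accumulation points lying in $F$), and the $C^2\loc(\Gw)$-convergence only controls $u_n$ on compacts $K\Sbs\Gw$. So you cannot even start the comparison with a bounded radial profile $\Psi$. More fundamentally, any $\Psi$ with $\Psi(0)<\infty$ yields a bounded $W$, which cannot dominate $u_n$ near those corners unless you already know $\sup_n u_n$ is bounded there — which is precisely the statement you are trying to prove at an arbitrary point of $F$, so the argument is circular. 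The subharmonic mean-value route fares no better: subharmonicity converts an $L^1$ (or $L^2$) bound on annular neighborhoods of $F$ into a pointwise bound, but the required integral bound near $F$ is exactly what the paper's energy estimates \req{1.2.22}, \req{1.2.26} and the iteration \req{I_b/2}--\req{1.2.71} are engineered to supply; it is not available for free. The paper's device of working with the one-parameter family $U_M$ in $B_R$, tracking how fast the energy $E_b(\tau,s;U_M)$ decays in $\tau$, and showing (via the Dini condition) that the cumulative ``penetration'' $\sum\tau^{i,j}$ stays below $b/2$, is what breaks this circularity; a one-shot \supers construction does not.
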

\begin{theorem}\label{Th.2} Suppose that there exists a constant $c>0$
\sth
\begin{equation}\label{h-2}
%\limsup_{\BSA{c} x\to 0\\ x'\neq 0\ESA} h(x)/ \bar h(|x'|)<\infty.
h(x)\leq c \,\bar h(|x'|) \forevery x\in \RN_+
\end{equation}
where $\bar h$ is given by \req{h-exp}. Assume that \req{tech1} and the following additional conditions hold:
\begin{equation}\label{tech1'}
 \limsup_{j\tin}\frac{\mu(a^{-j+1})}{\mu(a^{-j})}<1 \q\text{for some $a>1$}
\end{equation}
and
\begin{equation}\label{tech1''}
\lim_{s\to0}\mu(s)/|\ln s|=\infty.
\end{equation}
Condition \req{tech1''} guarantees that, for every real $k$, \req{eqh} has a solution $u_{0,k}$ with boundary data $k\gd_0$ (where $\gd_0$ denotes the Dirac measure at the origin).

Under these assumptions, if
\begin{equation}\label{nonDini}
\int_0^1 (\gw(t)/t)\, dt=\infty
\end{equation}
then
\begin{equation}\label{u0,infty}
u_{0,\infty}=\lim u_{0,k}
\end{equation}
is a solution of \req{eqh} in $\Gw$ but
$$u_{0,\infty}(x)=\infty\forevery x\in F.$$
\end{theorem}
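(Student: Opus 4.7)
The argument divides into two parts: (a) verifying that the monotone limit $u_{0,\infty}$ defines a classical solution of \req{eqh} in $\Gw=\RN_+\sms F$, and (b) establishing the blow-up assertion $u_{0,\infty}(\bar x)=\infty$ for every $\bar x\in F$. Part (a) is standard: since $h>0$ throughout $\Gw$, Keller--Osserman theory supplies a locally finite large solution of \req{eqh} in every relatively compact $V\Sbs\Gw$, which is an upper barrier for the monotone \seq $\set{u_{0,k}}$ on $V$. Elliptic regularity together with monotone convergence then produces a classical solution $u_{0,\infty}$ in $\Gw$.

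For part (b), fix $\bar x=(0,\bar x_N)\in F$. I would construct, for each $n\in\BBN$, a bounded non-negative subsolution $W_n$ of \req{eqh} in a cylindrical neighbourhood $Q_\gr=\set{|x'|<\gr}\ti(\bar x_N-\gr,\bar x_N+\gr)$ with $W_n(\bar x)\geq n$, $W_n\equiv 0$ on the lateral face $\set{|x'|=\gr}$, and $W_n$ small on the top and bottom faces. Because the upper bound $h(x)\leq c\,\bar h(|x'|)$ is independent of $x_N$, the same construction works at every $\bar x\in F$. If the $W_n$ are arranged so that $u_{0,k}\geq W_n$ on $\prt Q_\gr$ for every $k$ sufficiently large (depending on $n$), the maximum principle yields $u_{0,k}\geq W_n$ inside $Q_\gr$, and letting $k\tin$ followed by $n\tin$ gives $u_{0,\infty}(\bar x)=\infty$.

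The natural ansatz is $W_n(x)=V_n(|x'|)G(x_N)$, where $G\in C^2_c(\bar x_N-\gr,\bar x_N+\gr)$ is a fixed positive cut-off with $G(\bar x_N)=1$, and $V_n\colon[0,\gr]\to[0,\infty)$ is to be constructed with $V_n(0)\geq n$ and $V_n(\gr)=0$. The subsolution inequality then reduces to the radial ODE
$$V_n''(r)+\frac{N-2}{r}V_n'(r)\geq c\,\bar h(r)\,V_n^q(r)+C_G\,V_n(r),\q r\in(0,\gr).$$
At the critical scale $\bar h(r)=e^{-\gw(r)/r}$ the balance of the radial Laplacian against the absorption term yields a leading profile $V_n(r)\sim r^{-4/(q-1)}e^{\gw(r)/((q-1)r)}$, and the construction is carried out iteratively on dyadic shells $a^{-j-1}\leq r\leq a^{-j}$: condition \req{tech1'} lets one compare consecutive scales with a fixed geometric factor, while \req{tech1''} ensures that the algebraic prefactor is dominated by the exponential and that the linear correction $C_G V_n$ is absorbed into the absorption term. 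The non-Dini hypothesis \req{nonDini} enters decisively in the matching of $V_n$ across these shells, since it is precisely the condition that the admissible growth increments $\mu(a^{-j})-\mu(a^{-j+1})$ sum to $+\infty$; this is what allows $V_n(0)$ to be made arbitrarily large.

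The main obstacle I expect is the detailed subsolution construction on dyadic shells, in particular the matching of $V_n$ across consecutive shells and the verification of the lateral boundary comparison $u_{0,k}\geq W_n$ on $\set{|x'|=\gr}$. For the latter one uses that $u_{0,k}\geq u_{0,1}>0$ on compact subsets of $\Gw$ by the strong maximum principle, so that $u_{0,k}$ is uniformly positive away from $F$; since $W_n$ vanishes on the lateral face of $Q_\gr$ by construction and is made small on the top and bottom faces by an appropriate choice of $G$, the inequality $u_{0,k}\geq W_n$ on $\prt Q_\gr\cap\Gw$ follows for $k$ sufficiently large. A secondary check is that the ODE construction can be performed with $V_n$ bounded (so that the resulting $W_n$ is an honest classical subsolution in the closed set $\ovl{Q_\gr}$) while still achieving $V_n(0)\geq n$ with $n$ unbounded; this is where the non-Dini summability argument is used in its quantitative form.
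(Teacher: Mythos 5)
Your Part (a) is fine and matches the paper. Part (b) has a fatal flaw: the object you are trying to construct cannot exist. A non-negative subsolution $W_n$ of \req{eqh} satisfies $\Gd W_n\geq h\,W_n^q\geq 0$, hence is subharmonic in $Q_\gr$, hence its maximum over $\ovl{Q_\gr}$ is attained on $\prt Q_\gr$. You require $W_n$ to be bounded, small on $\prt Q_\gr$, and $\geq n$ at the interior point $\bar x$, with $n$ arbitrarily large; these requirements are mutually incompatible. The proposed radial profile $V_n(r)\sim r^{-4/(q-1)}e^{\gw(r)/((q-1)r)}$ in fact blows up as $r\to 0$, which contradicts your own boundedness requirement and also makes $W_n(\bar x)=V_n(0)G(\bar x_N)$ undefined. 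There is a second, more conceptual gap: your $W_n$ lives in a cylinder around $\bar x$ alone and never connects to the actual singularity of $u_{0,k}$ at the origin $(0,0)$. Since $u_{0,k}\uar u_{0,\infty}$ and $u_{0,\infty}$ is a finite solution in $\Gw$, the functions $u_{0,k}$ are uniformly bounded on every compact subset of $\Gw$, so they cannot dominate arbitrarily large interior barriers; the only place to mine unboundedness is the Dirac data at $x_N=0$, and your construction never reaches it.

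The paper's proof is necessarily of a different shape: it constructs subsolutions in the half-cylinders $\Gw_j=\set{|x'|<2^{-j},\ x_N>0}$ that touch the boundary $\set{x_N=0}$, with boundary data $\gg_j\sim A_j^{-1}\gf_1$ of the \emph{right size} (so that, after rescaling by $r_j$, one lands on a fixed constant-coefficient problem in the unit cylinder governed by Brada's exact asymptotics $\sim e^{-\sqrt{\gl_1}y_N}$). It then derives a shift inequality $u_{j-1}(x',x_N)\leq u_j(x',x_N+\tau_j)$ with $\tau_j\asymp\gw(r_j)$; condition \req{tech1'} is what makes $\tau_j$ comparable to $\gw(r_j)$ from below, and \req{nonDini} then forces $\sum\tau_j=\infty$. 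Note that the non-Dini condition enters through $\sum\gw(2^{-j})=\infty$, \emph{not} through divergence of $\sum\big(\mu(a^{-j})-\mu(a^{-j+1})\big)$ as you suggest; the latter sum is automatically infinite (since $\mu\to\infty$) and carries no information about the Dini dichotomy. Finally, a contradiction argument comparing $u_{0,\infty}$ against these subsolutions (via the rescaled very singular solution $w^*$) yields blow-up everywhere on $F$. Condition \req{tech1''} is used separately to guarantee the existence of $u_{0,k}$ in the first place. Your sketch does not contain the scaling structure, the Brada asymptotics, the shift comparison, or the correct role of the non-Dini hypothesis, so I do not see a way to repair the proposal along the lines you indicate.
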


\begin{corollary}\label{cor1}
Suppose that there exists a positive constant $c$ \sth
  \begin{equation}\label{hsim}
    c^{-1}\bar h(|x'|)\leq h(x)\leq c\bar h(|x'|) \forevery x\in \ovl{ \RN_+}
  \end{equation}
  where $\bar h$ is given by \req{h-exp} and satisfies
conditions \req{tech1}, \req{tech1'} and \req{tech1''}. Then the Dini condition \req{Dini1} is necessary and sufficient for the existence of a large solution of \req{eqh} in $\RN_+$. It is also necessary and sufficient for the existence of the strongly singular solution  $u_{0,\infty}$.
\end{corollary}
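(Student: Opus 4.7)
The corollary is essentially a repackaging of Theorems~\ref{t:Th.1} and~\ref{t:Th.2}, with the two-sided bound \req{hsim} invoked so that the upper and lower comparison hypotheses of the two theorems are simultaneously available. My plan is to argue the sufficiency and necessity of \req{Dini1} separately for each of the two objects (the large solution $U$ and the very singular solution $u_{0,\infty}$), and then reduce the large-solution necessity to the already-established statement about $u_{0,\infty}$.

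For the sufficiency direction, assume the Dini condition \req{Dini1}. The lower bound in \req{hsim} gives $h(x)\geq c^{-1}\bar h(|x'|)$, which implies \req{h-1}, so the hypotheses of \rth{Th.1} are met. I would consider the two increasing approximating sequences: $\{u_M\}$, where $u_M$ solves \req{eqh} in $\RN_+$ with constant boundary datum $M$, and $\{u_{0,k}\}$, whose existence is ensured by condition \req{tech1''}. Since $h$ is strictly positive on any compact subset of $\Gw=\RN_+\sms F$, the Keller--Osserman estimate bounds $u_M$ and $u_{0,k}$ uniformly on compact subsets of $\Gw$. Passing to the increasing limits then produces solutions of \req{eqh} on $\Gw$. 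Applying \rth{Th.1} to each of these limit processes lifts the limits to solutions on all of $\RN_+$; the first yields the maximal (large) solution, and the second yields the strongly singular solution $u_{0,\infty}$.

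For the necessity direction, assume \req{nonDini}. The upper bound in \req{hsim} is precisely \req{h-2}, so \rth{Th.2} applies and immediately furnishes the non-existence of $u_{0,\infty}$: the monotone limit is still a solution in $\Gw$, but it is identically $+\infty$ on $F$ and therefore cannot be extended to a (classical, finite) solution of \req{eqh} on $\RN_+$. To rule out a large solution under the same hypothesis, I would argue by contradiction: if a large solution $U$ existed in $\RN_+$, then for every $k>0$ the comparison principle would give $U\geq u_{0,k}$ in $\RN_+$, since $U$ blows up along $[x_N=0]$ while $u_{0,k}$ has the finite boundary measure $k\gd_0$. Taking $k\tin$ would force $U\geq u_{0,\infty}$, and hence $U\equiv+\infty$ on $F$, contradicting the fact that a large solution is by definition finite in the open half space $\RN_+\supset F$.

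The only nontrivial points I expect in executing this plan are the routine Keller--Osserman / comparison facts needed to justify (i) local boundedness on $\Gw$ of the approximating sequences and (ii) the inequality $U\geq u_{0,k}$ for the large solution against the Dirac-mass solutions. Both are standard in the fading-absorption literature once the absorption coefficient is strictly positive on the relevant domain, so the corollary follows essentially by bookkeeping between \rth{Th.1} and \rth{Th.2}.
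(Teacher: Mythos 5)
Your proposal is correct and follows the same route the paper clearly intends: sufficiency is Theorem \ref{Th.1} (the lower bound in \eqref{hsim} gives \eqref{h-1}, yielding the maximal/large solution directly and then $u_{0,\infty}$ by applying the convergence statement to the increasing sequence $\{u_{0,k}\}$), and necessity is Theorem \ref{Th.2} (the upper bound gives \eqref{h-2}, so $u_{0,\infty}\equiv\infty$ on $F$), with the large-solution necessity reduced to the $u_{0,\infty}$ case via the comparison $U\geq u_{0,k}$. The paper leaves this gluing implicit; your version merely spells out the routine Keller--Osserman local bound and the comparison step, which is exactly what is needed.
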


Problems concerning the propagation of singularities for semilinear equations with absorption  have been studied in \cite{MV-ANS}, \cite{ShV1} (parabolic case) and in \cite{MV-Pisa}, \cite{ShV2} (elliptic case). However,
in  these papers it was assumed that the absorption term is positive everywhere in the interior of the domain, fading only at the initial time  or on the spatial boundary. Consequently singularities could propagate only along $t=0$ or along the boundary.

In \cite{MV-ANS} the authors studied the equation
$$\prt_tu-\Gd u + e^{-\rec{t}}u^q=0\q \text{in }\RN\ti\BBR_+$$
and proved that if $u$ is a positive solution with strong singularity at a point on $t=0$ then $u$ blows up at every point of the initial plane. In \cite{MV-Pisa} the authors studied the corresponding elliptic problem in a domain $D$ where the coefficient of the  absorption term is $e^{-\rec{\gr(x)}}$, $\gr(x)=\dist(x,\bdw)$, proving a similar result.

In \cite{ShV1} the authors considered the equation,
$$\prt_tu-\Gd u + e^{-\frac{\gw(t)}{t}}u^q=0\q \text{in }\RN\ti\BBR_+$$
where $\gw$ is a positive,  continuous and increasing function on $\BBR_+$.
They proved that if $\sqrt \gw$ satisfies the Dini condition then there exist solutions with a strong isolated singularity at a point on $t=0$.
Similar  sufficient conditions were obtained in \cite{ShV2} and \cite{ShV3} \wrto an elliptic (respectively parabolic) equation where the   absorption term vanishes at the boundary (respectively at $x=0$).

The methods of the present paper  can be applied to these and other problems with fading absorption, to obtain sharp necessary and sufficient conditions for the propagation of singularities.
\medskip

\noindent\textbf{Acknowledgment.}
AS wishes to thank the Department of Mathematics at the Technion for its hospitality during his visits.

\section{Proof of Theorem \ref{Th.1}}

Given $R>0$ let $x^R=(0,R)$ and denote by $B_R$ the ball of radius $R$ centered at $x^R$.
We shall prove the following:
  \begin{theorem}\label{Th.1'} Suppose that $h=\bar h$ in a neighborhood of the origin. Then, under the assumptions Theorem \ref{Th.1},  there exists $R>0$ \sth \req{eqh} has a solution $V^R$ in $B_R$ which blows up everywhere on the boundary:
$$V^R(x)\tin \q\text{as}\q x\to\partial B_R.$$
\end{theorem}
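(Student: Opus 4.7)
The plan is to produce $V^R$ as the monotone limit of finite-boundary-data \sols of \req{eqh} in $B_R$, bounded uniformly by a globally defined \supers that blows up on all of $\prt B_R$. For each integer $n\geq 1$, let $v_n\in C(\ovl{B_R})$ solve \req{eqh} in $B_R$ with $v_n\equiv n$ on $\prt B_R$; the constants $0$ and $n$ serve as sub- and \supers, yielding existence and the bound $0\leq v_n\leq n$, and the comparison principle makes $\{v_n\}$ non-decreasing. If I can exhibit a \supers $W$ of \req{eqh} in $B_R$ with $W(x)\tin$ as $x\to\prt B_R$, then $v_n\leq W$ in $B_R$ by comparison, so $V^R:=\lim v_n$ is locally bounded; elliptic regularity then gives $V^R\in C^2(B_R)$ solving \req{eqh}, and since $v_n\equiv n$ on $\prt B_R$ we still have $V^R(x)\tin$ as $x\to\prt B_R$.

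The construction of $W$ is local: at every $y\in\prt B_R\sms\set{P_0,P_1}$, where $P_0=(0,0)$ and $P_1=(0,2R)$ are the two points at which the fading axis $F$ meets $\prt B_R$, the coefficient $h=\bar h(|x'|)$ is bounded below on a \ngh of $y$, and the classical Osserman barrier $C\,\dist(x,\prt B_R)^{-2/(q-1)}$ provides a local \supers. All the work is therefore in constructing local \superss near the two polar points.

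Near $P_0$ I would look for a barrier of the form $W_0(x)=\gf(|x'|)$, depending only on the transverse radial variable $r=|x'|$; the \supers inequality then reduces to the one-dimensional ODE
\[
-\gf''(r)-\frac{N-2}{r}\gf'(r)+e^{-\gw(r)/r}\gf(r)^q\geq 0,\qq \gf(r)\tin \text{ as } r\downarrow 0.
\]
A natural ansatz is $\gf(r)=\exp(\ga\gw(r)/r)$ with $\ga>1/(q-1)$, chosen so that the absorption side $\bar h\gf^q=e^{(\ga(q-1)-1)\gw(r)/r}\gf$ dominates, exponentially in $\mu(r):=\gw(r)/r$, the polynomial-in-$\mu'$, $\mu''$ terms coming from $\gf''+\frac{N-2}{r}\gf'$ once $r$ is small enough. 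The monotonicity conditions \req{tech1}(ii)-(iii) keep the sign of $\mu'$ favourable, and the Dini hypothesis \req{Dini1} is what ensures that the antiderivative of $\gw(t)/t$ is finite near zero and hence that the radius $\gr_0$ of validity of the ODE estimate can be pinned down uniformly in the cylinder $\set{|x'|<\gr_0}\cap B_R$; a reflected construction handles $P_1$.

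The global super-solution $W$ is then assembled from $W_0$, its analogue near $P_1$, and the Osserman barrier $W_1$ by taking the pointwise maximum (which preserves the \supers property in the viscosity sense) after adjusting each piece by an additive constant so that the pieces dominate on overlaps, and by choosing $R$ small enough to remain within the regime of validity of $W_0$. The main obstacle is the sharp ODE verification: the cross-term $\ga^2(\mu')^2\gf$ arising from $\gf''$ is genuinely large, and the Dini condition is precisely what produces the antiderivative estimates needed to absorb it into the nonlinearity; the sharpness of this condition is confirmed a posteriori by the complementary non-existence statement of Theorem~\ref{Th.2}.
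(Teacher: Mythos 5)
Your proposal has a fundamental gap that sinks the whole construction: the transverse barrier near the poles is infinite along the axis segment, not just at the boundary.

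Recall the geometry: $B_R$ is centered at $x^R=(0,R)$, so $F\cap\ovl{B_R}$ is the segment from $P_0=(0,0)$ to $P_1=(0,2R)$, of which only the endpoints lie on $\prt B_R$; the open segment $\set{x'=0,\ 0<x_N<2R}$ lies in the interior. Your barrier $W_0(x)=\gf(|x'|)=\exp(\ga\gw(|x'|)/|x'|)$ satisfies $W_0\equiv\infty$ on this \emph{entire} interior segment, because condition \req{tech1}(iii) forces $\mu=\gw(r)/r\to\infty$ as $r\to 0$ independently of $x_N$. Consequently, the comparison $v_n\le W$ carries no information precisely on the set $F\cap B_R$, which is exactly where local boundedness of $V^R$ must be established. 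Off $F$ the absorption is strictly positive and the Keller--Osserman bound already controls $v_n$; the whole difficulty of \rth{Th.1'} is to show that the singularity cannot travel \emph{along} $F$, and a supersolution that is infinite on $F$ cannot see this. Note also that this is consistent with sharpness: your barrier construction, were it to go through formally, makes no essential use of the Dini condition, yet by \rth{Th.2} boundedness must fail without it. Any pointwise barrier with the symmetry $W_0=\gf(|x'|)$ would be a supersolution just as well in the non-Dini regime, so such a barrier cannot possibly force finiteness on $F$.

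There is a second, independent problem with the ODE verification: the inequality you need,
\begin{equation*}
\exp\bigl((\ga(q-1)-1)\mu(r)\bigr)\ \ge\ \ga\mu''(r)+\ga^2\mu'(r)^2+\frac{N-2}{r}\,\ga\,|\mu'(r)|,
\end{equation*}
requires pointwise upper bounds on $|\mu'|$ and $|\mu''|$ in terms of $\mu$, and the hypotheses \req{tech1} (monotonicity of $\gw$, monotone decrease of $\mu$, divergence of $\mu$) and \req{Dini1} (an integral bound) provide no such control: $\mu$ can be monotone decreasing and Dini while $|\mu'|$ is unbounded relative to any exponential of $\mu$ on a sparse set of scales. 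The paper deliberately avoids any pointwise differential inequality for this reason; its proof of \rth{Th.1'} is an energy argument (Lemmas~\ref{L2.1}--\ref{tau1} and Proposition~\ref{main}), bounding $\int(|\nabla U_M|^2+h\,U_M^{q+1})$ on shrinking cylinders around $F$ via a differential inequality in the lateral thickness, with the Dini condition entering only at the final step to sum the geometric shift lengths $\tau^{i,j}\lesssim\gw(s_i)$ and verify $\sum_i\tau^{i,j}<b/2$. The energy approach works precisely because it averages over cross-sections transverse to $F$, extracting information at $x'=0$ from the behavior at $|x'|>0$, which a radial-in-$|x'|$ pointwise barrier structurally cannot do.
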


Now let $v_k$ denote the solution of \req{eqh} in $\RN_+$ \sth $v_k=k$ on the boundary and put
$$V=\lim_{k\tin} v_k.$$
Condition \req{h-1} implies that there exist positive constants $\unl c$ and $\unl R$ \sth
\begin{equation}\label{h-1'}
 h(x)\geq \unl{c}\,\bar h(|x'|) \q\text{for} \q |x|<\unl R.
\end{equation}
Therefore Theorem \ref{Th.1'} implies that there exists $R\in (0,\unl R/2)$ \sth
$$V\leq V^R.$$
Further this implies that $V$ is locally bounded in the strip $0<x_N<R$ and therefore, everywhere in $\RN_+$. Finally, since $V$ dominates every solution of \req{eqh}, the conclusion of Theorem \ref{Th.1} follows.
\medskip

 The proof of Theorem \ref{Th.1'} is based on estimates of certain energy integrals of solutions of \req{eqh}. In a half space these integrals are infinite.
 Therefore we shall estimate integrals  over a bounded domain for solutions with arbitrary large boundary data.

Condition \req{Dini1} implies that $\lim_{s\to 0}\gw(s)=0$ while \req{tech1} implies that $\lim_{s\to 0}\bar h(s)=0$. We
 extend both of these functions to $[0,\infty)$ by setting them equal to zero at the origin.

 In the course of the proof we denote by $c$, $c'$, $c_i$
constants which depend only on
 $N,q$. The value of the constant may vary from one formula to another. A notation such as $C(b)$ denotes a constant
 depending on the parameter $b$ as well as on $N,q$.

\subsection{Part 1.}
Let $R, b$  be positive numbers \sth $R/8<b<R/2$. Denote by $U_M$, $M>0$,  the solution of \req{eqh} in $B_R(0)$ \sth $U_M=M$ on $\prt B_R(0)$.

  Let
$$\Gw_b=\{x=(x',x_N)\in \BBR^N:|x'|<b,\;|x_N|<b\}.$$
We start with an elementary estimate of the energy integral:
\begin{equation}\label{energy}
    I_b(M)=\int_{\Gw_b}(|\nabla U_M|^2 + h(x)U_M^{q+1})\,dx.
\end{equation}

\begin{lemma}\label{L2.1}
Let $h$ be as in \eqref{h-exp} and assume \req{tech1}. Then
\begin{equation}\label{1.2.2}
I_b(M)\leq C_1(b)M^{q+1},\q C_1(b)=cb^N \bar h(8b).
\end{equation}
\end{lemma}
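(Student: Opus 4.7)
The plan is to bound $I_b(M)$ via a \emph{global} energy identity on the ball $B_R(0)$, and then use nonnegativity of the integrand to pass to $\Omega_b$. The containment $\Omega_b\subset B_R(0)$ is automatic, since for $x\in\Omega_b$ one has $|x|^2=|x'|^2+x_N^2<2b^2$, so $|x|<b\sqrt 2<R$ by the hypothesis $b<R/2$.

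First I would record the pointwise estimate $0\le U_M\le M$ in $B_R(0)$, which is immediate from the maximum principle: the constants $0$ and $M$ are, respectively, a sub- and a supersolution of \eqref{eqh} attaining the correct boundary data. Next, multiplying the equation by $U_M$ and integrating over $B_R(0)$ — integrating by parts once using $U_M=M$ on $\prt B_R(0)$, and then converting the resulting boundary flux back into a volume integral by a second application of the divergence theorem together with the PDE — I would arrive at the identity
\[
\int_{B_R(0)}|\nabla U_M|^2\,dx+\int_{B_R(0)}h\,U_M^{q+1}\,dx \;=\; M\int_{B_R(0)}h\,U_M^q\,dx.
\]
Replacing $U_M^q\le M^q$ on the right gives the cleaner inequality
\[
\int_{B_R(0)}\bigl(|\nabla U_M|^2+h\,U_M^{q+1}\bigr)\,dx \;\le\; M^{q+1}\int_{B_R(0)}h(x)\,dx.
\]

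It then remains to estimate $\int_{B_R(0)}h\,dx=\int_{B_R(0)}\bar h(|x'|)\,dx$. By \eqref{tech1}(ii) the function $\mu$ is monotone decreasing, so $\bar h(s)=e^{-\mu(s)}$ is monotone \emph{increasing}; since $|x'|\le|x|\le R$ on $B_R(0)$, this yields
\[
\int_{B_R(0)}\bar h(|x'|)\,dx \;\le\; |B_R(0)|\,\bar h(R) \;=\; c_N R^N\bar h(R),
\]
and another use of monotonicity together with $R<8b$ bounds this by $c\,b^N\bar h(8b)$. Combining these displays and restricting the integral to $\Omega_b$ produces $I_b(M)\le C_1(b)M^{q+1}$ with $C_1(b)=c\,b^N\bar h(8b)$.

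There is no real obstacle: the argument is a single-shot energy identity, the pointwise bound $U_M\le M$, and the monotonicity of $\bar h$. The only step that requires mild care is the double application of the divergence theorem that converts the boundary flux $M\int_{\prt B_R(0)}\prt_\nu U_M\,dS$ into the volume integral $M\int h\,U_M^q\,dx$ via the equation. The crucial structural input is simply that $h=\bar h$ vanishes rapidly at $x'=0$ and is monotone in $|x'|$, so the trivial replacement $h\le\bar h(R)$ already absorbs the factor $M^{q+1}$ coming from the supremum bound on $U_M$.
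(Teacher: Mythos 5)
Your proof is correct and essentially reproduces the paper's argument. The only cosmetic difference is that the paper tests the equation against $v_M:=U_M-M$, which vanishes on $\partial B_R(0)$ and so eliminates the boundary term in one step, whereas you test against $U_M$ and need a second application of the divergence theorem to convert the flux $M\int_{\partial B_R(0)}\partial_\nu U_M\,dS$ into $M\int_{B_R(0)}h\,U_M^q\,dx$; both routes yield the same energy identity and both close the argument via $U_M\le M$, the monotonicity of $\bar h$, and the constraint $R/8<b<R/2$.
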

\begin{proof}
Let $v_M:=U_M-M$.  Multiplying \eqref{eqh} (for $u=U_M$) by $v_M$ and integrating by parts we obtain,
$$\int_{B_R(0)}(|\nabla U_M|^2+h(x)U_M^{q}v_M)dx=0.$$
Therefore
\begin{equation}\label{1.2.5}\BAL
I_b(M)\leq& \int_{B_R(0)}(|\nabla U_M|^2+h(x)U_M^{q+1})dx\\
=&M\int_{B_R(0)}h(x)U_M^qdx \leq c'M^{q+1}\bar h(R)R^N\leq cb^N\bar h(8b)M^{q+1}.
\EAL\end{equation}
\end{proof}
\medskip

\noindent\emph{Notation. } Put
\begin{equation}\label{1.2.7}
\Omega_b(s):=\{x\in\mathbb R^N:s<|x'|<b-s,\
|x_N|<b-s\}\quad\forall\;s\in(0,b/2).
\end{equation}
If $v$ is a positive solution of \req{eqh} in $B_R(0)$, denote
\begin{equation}\label{1.2.8}
J_{b}(s;v):=\int_{\Omega_b(s)}(|\nabla_x v|^2 +\bar h(|x'|)v^{q+1})dx.
\end{equation}
Finally denote,
\begin{equation}\label{varphi}
   \varphi_b(s):=\int_{\partial\Omega_b(s)}h(x)^{-\frac2{q-1}}d\sigma.
\end{equation}

\begin{proposition}\label{L2.2}  There exists a constant $c$ \sth, for every positive solution $v$ of \req{eqh} in $B_R(0)$,
\begin{equation}\label{1.2.9}
J_{b}(s;v)\leq
c\bigg(\int_0^s\varphi_b(r)^{-\frac{q-1}{q+3}}dr\bigg)^{-\frac{q+3}{q-1}}\quad\forall\,s\in (0,b/2).
\end{equation}
\end{proposition}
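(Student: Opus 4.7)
The approach is a Saint--Venant type energy argument: the plan is to derive a first-order differential inequality for the decreasing function $s\mapsto J_b(s;v)$ and then integrate it. A clean way to set this up is to notice that $\Omega_b(s)$ is the super-level set $\{\rho>s\}$ of the Lipschitz function
$$\rho(x):=\min\bigl(|x'|,\;b-|x'|,\;b-|x_N|\bigr),$$
which satisfies $|\nabla\rho|=1$ almost everywhere. Under the standing assumption $h=\bar h$ in a neighborhood of the origin (inherited from \rth{1'}), the co-area formula then gives
$$-J_b'(s;v) \;=\; \int_{\partial\Omega_b(s)} \bigl(|\nabla v|^2 + h(x)\,v^{q+1}\bigr)\,d\sigma.$$

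For a matching upper bound on $J_b(s;v)$, I would start from the energy identity obtained by multiplying \req{eqh} by $v$ and integrating by parts over $\Omega_b(s)$, namely
$$J_b(s;v) = \int_{\partial\Omega_b(s)} v\,\partial_\nu v\,d\sigma.$$
Cauchy--Schwarz on the right-hand side separates the surface $L^2$ norms of $v$ and of $|\nabla v|$. The weight $h^{-2/(q-1)}$ appearing in $\varphi_b$ is then produced by the factorization $v^2=(hv^{q+1})^{2/(q+1)}\cdot h^{-2/(q+1)}$ combined with H\"older's inequality using conjugate exponents $(q+1)/2$ and $(q+1)/(q-1)$. Bounding each of the two resulting boundary integrals by $-J_b'(s;v)$ and using $\tfrac12+\tfrac1{q+1}=\tfrac{q+3}{2(q+1)}$ yields the differential inequality
$$J_b(s;v)^{2(q+1)/(q+3)} \;\le\; c\,\varphi_b(s)^{(q-1)/(q+3)}\,\bigl(-J_b'(s;v)\bigr).$$

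This rearranges to $\frac{d}{ds}\bigl[J_b(s;v)^{-(q-1)/(q+3)}\bigr]\ge c'\,\varphi_b(s)^{-(q-1)/(q+3)}$; integration over $(0,s)$ while discarding the nonnegative term at $s=0$ produces the stated estimate \req{1.2.9}. The main technical point that will need care is that $\Omega_b(s)$ is only a Lipschitz domain, with corners along the intersections of the faces $\{|x'|=s\}$, $\{|x'|=b-s\}$ and $\{x_N=\pm(b-s)\}$; both the integration by parts and the coarea-type differentiation formula for $J_b$ must be justified at these corners. This is standard: one can mollify $\rho$ and pass to the limit, or observe directly that the corner set is lower dimensional and contributes nothing to the surface integrals in play.
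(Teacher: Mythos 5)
Your argument is correct and follows essentially the same route as the paper: the energy identity on $\Omega_b(s)$, a Cauchy--Schwarz/H\"older factorization of the boundary term producing the weight $\varphi_b(s)^{(q-1)/(2(q+1))}$, recognizing the boundary integral as $-J_b'(s;v)$, and integrating the resulting differential inequality. The only cosmetic differences are that the paper combines Cauchy--Schwarz and H\"older into a single three-factor H\"older inequality and uses Young's inequality where you bound each of $\int|\nabla v|^2 d\sigma$ and $\int hv^{q+1}d\sigma$ by $-J_b'$ directly; both yield the identical exponent $\tfrac{q+3}{2(q+1)}$.
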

\begin{proof} Put $S_b(s):=\partial\Omega_b(s)$ and denote by $\vec{n}=\vec{n}(x)$ the unit outward normal  to $S_b(s)$
 at $x$.

Multiplying equation \req{eqh}  by $v$ and integrating by parts over $\Gw_b(s)$ we obtain,
\begin{equation}\label{1.2.10}
\int_{\Omega_b(s)}(|\nabla_xv|^2+\bar h(|x'|)v^{q+1})dx=\int_{S_b(s)}\frac{\partial
v}{\partial \vec{n}}vd\sigma,
\end{equation}

We estimate the term on the  right-hand
side  using first H\"older's inequality (for a product of three terms) and secondly Young's inequality:
\begin{equation}\label{1.2.11}\BAL
\bigg|&\int_{S_b(s)}v\frac{\partial v}{\partial \vec
n}d\sigma\bigg| \leq
\int_{S_b(s)}|\nabla_xv||v|d\sigma\leq\\
\bigg(&\int_{S_b(s)}|\nabla_xv|^2d\sigma\bigg)^{\frac{1}{2}}\bigg(\int_{S_b(s)}h(x)
|v|^{q+1}d\sigma\bigg)^{\frac{1}{q+1}}
\varphi_b(s)^{\frac{q-1}{2(q+1)}}\leq\\
c_1\bigg(&\int_{S_b(s)}(|\nabla_xv|^2+h(x)v^{q+1})d\sigma\bigg)^{\frac{q+3}{2(q+1)}}
\varphi_b(s)^{\frac{q-1}{2(q+1)}}.
\EAL\end{equation}
Substituting estimate \eqref{1.2.11} into
\eqref{1.2.10} we obtain:
\begin{multline}\label{1.2.13}
J_{b}(s;v)\leq
c_2\bigg(\int_{S_b(s)}(|\nabla_xv|^2+h(x)v^{q+1})d\sigma\bigg)^{\frac{q+3}{2(q+1)}}
\varphi_b(s)^{\frac{q-1}{2(q+1)}}.
\end{multline}
Since
$$
-\frac{d}
{ds}J_{b}(s;v)=\int_{S_b(s)}(|\nabla_xv|^2+h(x)v^{q+1})d\sigma,
$$
%where $\kappa(s)= s^{N-2}$ on $|x'|=s$, $\kappa(s)= (b-s)^{N-2}$ on $|x'|=b-s$ and $\kappa(s)=1$ on $|x_N|=b-s$.
inequality \eqref{1.2.13}  is equivalent to
$$
J_{b}(s;v)\leq c_3\varphi_b(s)^{\frac{q-1}{2(q+1)}}\Big(-\frac
d{ds}J_{b}(s;v)\Big)^{\frac{q+3}{2(q+1)}}\qquad\forall\, s\in (0,b/2).
$$
 Solving this differential
inequality, with initial data $J_b(b/2;v)=0$, we obtain \eqref{1.2.9}.
\end{proof}
\medskip

In continuation we derive a more explicit estimate for $h$
as in  \eqref{h-exp}.
 We need the following technical lemma.
 \begin{lemma}\label{L2.3}
Let $A>0$, $m\in\mathbb N$, $ l\in\mathbb R^1$ and let $\omega\in C^1(0,\infty)$ be a positive
function satisfying condition \eqref{tech1}. Then there exist
$\bar s\in (0,1)$, depending on $A,l$ and $\gw$ such that the following inequality holds:
\begin{multline}\label{1.2.19}
\int_0^s
t^{m-1}\omega(t)^l\exp\Big(-A\mu(t)\Big)dt\geq\frac{s^{m+1}\omega(s)^{l-1}}{(m+1)\mu(s)^{-1}+A}
\exp(-A\mu(s))\\\forall s:0<s<\bar s.
\end{multline}
\end{lemma}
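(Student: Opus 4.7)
Denote the two sides of \eqref{1.2.19} by
\[
I(s):=\int_0^s t^{m-1}\omega(t)^l e^{-A\mu(t)}\,dt,\qquad
g(s):=\frac{s^{m+1}\omega(s)^{l-1}}{(m+1)\mu(s)^{-1}+A}\,e^{-A\mu(s)}.
\]
The plan is to verify the pointwise differential inequality $I'(s)\geq g'(s)$ on an interval $(0,\bar s)$ and then integrate up from $s=0$, exploiting that both $I$ and $g$ vanish at $0^+$. The first simplification is to use the identity $s\mu(s)=\omega(s)$ to rewrite $g$ in the cleaner form $g(s)=s^m\omega(s)^l e^{-A\mu(s)}/D(s)$ with $D(s):=(m+1)+A\mu(s)$. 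Since $I'(s)=s^{m-1}\omega(s)^l e^{-A\mu(s)}$, one then has the clean ratio $I'(s)/g(s)=D(s)/s$, which is what will be compared against $g'/g$.

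Logarithmic differentiation gives $g'/g = m/s + l\omega'/\omega - A\mu'(1+D^{-1})$, so $I'\geq g'$ is equivalent to $(D-m)/s + A\mu'(1+D^{-1}) \geq l\omega'/\omega$. Substituting $D-m=1+A\mu$ and $A\mu'(s)=A(\omega'(s)-\mu(s))/s$ (which comes from $\mu=\omega/s$), and applying the key identity $1-A\mu/D=(m+1)/D$, routine algebra reduces the inequality to
\[
\frac{m+1}{sD(s)} + \frac{\omega'(s)}{\omega(s)}\Bigl(A\mu(s)\bigl(1+D(s)^{-1}\bigr) - l\Bigr) \geq 0.
\]
The first summand is strictly positive. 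In the second, $\omega'/\omega\geq 0$ by \eqref{tech1}(i), while $A\mu(s)(1+D(s)^{-1})\to\infty$ as $s\to 0$ by \eqref{tech1}(iii); hence for $s$ smaller than some threshold $\bar s=\bar s(A,l,\omega)\in(0,1)$, the parenthesized factor exceeds $l$, and the whole expression is non-negative.

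To close the argument, observe that $I(0^+)=0$ since the integrand is integrable at $0$ (indeed it decays faster than any power of $t$, because $e^{-A\mu(t)}$ dominates algebraic factors), and likewise $g(0^+)=0$ by the same superpolynomial decay applied to $s^{m+1}\omega(s)^{l-1}e^{-A\mu(s)}$ (the denominator tends to $A>0$). Integrating $I'-g'\geq 0$ from $0$ to $s$ then yields $I(s)\geq g(s)$ on $(0,\bar s)$, which is the claimed estimate \eqref{1.2.19}. The main technical obstacle is the algebraic simplification: the a priori large negative contribution $A\mu'(1+D^{-1})$ must be absorbed cleanly against $(D-m)/s=(1+A\mu)/s$, and this is made possible precisely by the identity $1-A\mu/D=(m+1)/D$, which is what collapses everything into a manifestly non-negative expression for small $s$.
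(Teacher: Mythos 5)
Your proposal is correct, and it takes a genuinely different route from the paper. The paper's proof is a single integration by parts applied to the closely related integral $\int_0^s t^m\omega(t)^l e^{-A\mu(t)}\,dt$ (exploiting that $t^{m+1}\mu'(t)=t^m\omega'(t)-t^{m-1}\omega(t)$ to split off a convenient term), followed by the elementary bounds $t^m\le s\,t^{m-1}$ and $\omega(t)^{l+1}\le\omega(s)\omega(t)^l$ on $(0,s]$, and the same smallness condition $A\mu\ge l$ to discard the positive remainder; solving the resulting algebraic inequality gives \eqref{1.2.19} directly. You instead prove the pointwise derivative inequality $I'\ge g'$ and integrate up from $0$. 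Your algebra is correct — the ratio $I'/g=D/s$ with $D=(m+1)+A\mu$, the logarithmic derivative of $g$, the substitutions $D-m=1+A\mu$ and $A\mu'=A(\omega'-\mu)/s$, and the cancellation $1-A\mu/D=(m+1)/D$ all check out, collapsing the condition to
\[
\frac{m+1}{sD(s)}+\frac{\omega'(s)}{\omega(s)}\bigl(A\mu(s)(1+D(s)^{-1})-l\bigr)\ge 0,
\]
which is non-negative once $A\mu(s)\ge l$, i.e.\ for $s<\bar s$. The two approaches are of comparable length; yours makes the role of $\bar s$ (where the bracketed factor turns non-negative) quite transparent, while the paper's is a bit lighter on bookkeeping. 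One small caveat in your closing step: the claim that $e^{-A\mu(t)}$ \emph{dominates algebraic factors} (hence $I(0^+)=g(0^+)=0$ for any $l\in\mathbb R$) does not follow from \eqref{tech1} alone, which only gives $\mu(s)\to\infty$ with no rate — for very negative $l$ and slowly growing $\mu$ this could fail. In the cases actually used in the paper ($l=0$, $m=1$) it is immediate, and in fact the paper's own integration-by-parts step silently assumes the same vanishing of the boundary term at $t=0$; so this is a shared tacit hypothesis rather than a defect particular to your argument, but it is worth flagging explicitly rather than invoking superpolynomial decay.
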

\begin{proof}
Due to condition \eqref{tech1} (ii)  integration by parts yields:
\begin{equation}\label{1.2.20}\BAL
&\int_0^st^m\omega(t)^l\exp(-A\mu(t))dt\\
=&\frac{s^{m+1}}{m+1}\omega(s)^l\exp(-A\mu(s))-
\int_0^s\frac{At^{m-1}}{m+1}\exp(-A\mu(t))\omega(t)^{l+1}dt\\
+&\int_0^s\frac{t^{m+1}}{m+1}\exp(-A\mu(t))\omega'(t)\omega^{l-1}( A\mu(t)-l)dt.
\EAL\end{equation}
Again due to \eqref{tech1} (ii), there exists $\bar s>0$ such that
$$A\mu(s)\geq l\quad\forall s\in(0,\bar s).$$
For later estimates it is convenient to choose $\bar s$ in $ (0,1)$.

As $\omega(s)$ is non-decreasing, it follows that, for $0<s\le \bar s$,
\begin{multline*}
\Big(s+\frac{A\omega(s)}{m+1}\Big)\int_0^st^{m-1}\omega(t)^l\exp(-A\mu(t))dt
\geq\frac{s^{m+1}}{m+1}\omega(s)^l\exp(-A\mu(s)).
\end{multline*}
This inequality is equivalent to \eqref{1.2.19}.
\end{proof}

%We need some additional notation:
%$$S^{(0)}(s)=\partial\Omega_0(s)=S_1^{(0)}(s)\cup S_2^{(0)}(s),$$
 %\quad  \varphi_{0}(s)= \varphi_{0,1}(s)+\varphi_{0,2}(s)

%\begin{align*}
%S_1^{(0)}(s)&=S_{1,1}^{(0)}(s)\cup S_{1,2}^{(0)}(s),\\
%S_{1,1}^{(0)}(s)&=\{x:|x'|=s,\;B_0-s>|x_N|\}, \\
%S_{1,2}^{(0)}(s)&=\{x:|x'|=B_0-s,\; B_0-s>|x_N|\},\\
%S_2^{(0)}(s)&=\{x:B_0-s>|x'|>s,\;|x_N|=B_0-s\}.
%\varphi_{0}^{(i)}s)&=\int_{S_i^{(0)}(s)}h(x)^{-\frac2{q-1}}d\sigma, \;i=1,2.
%\end{align*}

\begin{proposition}\label{J0j}
 Assume  that $h$ is given by  \eqref{h-exp}  and  satisfies \eqref{tech1}. Then there exists a constant $s^*\in (0,b/2)$, depending on $N,q$ and the rate of blow-up of $\mu(s)=\gw(s)/s$ as $s\to 0$, \sth
\begin{equation}\label{1.2.22}
\begin{gathered}
J_b(s;v)\leq cb^{N-1}\exp Q(s) \forevery s\in (0,s^*)\\
Q(s)=\frac{2\mu(s)}{q-1}+\frac{q+3}{q-1}\ln\mu(s)-\frac{q+3}{q-1}\ln s,
\end{gathered}
\end{equation}
for every positive solution $v$ of \req{eqh} in $B_R(0)$.

If, in addition, there exists a positive constant $\gb$ such that
\begin{equation}\label{tech2}
\gb \ln \rec{s}\leq {\mu(s)}\quad 0<s\le s^*,
\end{equation}
 then
\begin{equation}\label{estJj}
Q(s)\leq Q_0\mu(s) \quad 0<s\le s^*
\end{equation}
where
\begin{equation}\label{estP0}
Q_0:=\frac2{q-1}+\frac{q+3}{(q-1)}+\frac{q+3}{\gb(q-1)}.
\end{equation}

\end{proposition}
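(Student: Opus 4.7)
The plan is to combine the abstract energy inequality of Proposition \ref{L2.2} with an explicit bound on $\varphi_b(s)$ tailored to $h=\bar h$, and then invoke Lemma \ref{L2.3} to evaluate the resulting integral asymptotically. Since $h=\bar h(|x'|)=e^{-\mu(|x'|)}$, we have $h^{-2/(q-1)}(x)=\exp(2\mu(|x'|)/(q-1))$, so I would decompose $\partial\Omega_b(s)$ into the inner cylinder $\{|x'|=s\}$, the outer cylinder $\{|x'|=b-s\}$ and the two horizontal caps $\{x_N=\pm(b-s)\}$. Using that $\mu$ is decreasing by \req{tech1}(ii), each piece is bounded as follows: the inner cylinder contributes at most $cs^{N-2}(b-s)e^{2\mu(s)/(q-1)}\le cb^{N-1}e^{2\mu(s)/(q-1)}$; the caps contribute at most $ce^{2\mu(s)/(q-1)}\int_s^{b-s}\rho^{N-2}d\rho\le cb^{N-1}e^{2\mu(s)/(q-1)}$; and the outer cylinder contributes at most $c(b-s)^{N-1}e^{2\mu(b-s)/(q-1)}\le cb^{N-1}e^{2\mu(s)/(q-1)}$. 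Summing these three estimates yields
\[
\varphi_b(s)\le c\, b^{N-1}e^{2\mu(s)/(q-1)}.
\]

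Next I would apply Lemma \ref{L2.3} with $m=1$, $l=0$ and $A=2/(q+3)$ (the hypothesis $A\mu(s)\ge l$ is automatic since $l=0$) to get, for $s$ sufficiently small,
\[
\int_0^s e^{-A\mu(t)}\,dt\ge \frac{s^2/\omega(s)}{2/\mu(s)+A}\,e^{-A\mu(s)}\ge c'\,\frac{s}{\mu(s)}\,e^{-2\mu(s)/(q+3)},
\]
where the last step uses $\mu(s)\to\infty$ as $s\to 0$ to absorb the additive term $2/\mu(s)$ into a multiplicative constant. Substituting the upper bound on $\varphi_b$ into \req{1.2.9} and raising to the power $-(q+3)/(q-1)$ then produces
\[
J_b(s;v)\le c\,b^{N-1}\paran{\mu(s)/s}^{(q+3)/(q-1)}e^{2\mu(s)/(q-1)}=c\,b^{N-1}\exp Q(s),
\]
which is \req{1.2.22}. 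The threshold $s^*$ is chosen small enough that $\mu(s)\ge 1$ on $(0,s^*)$ and that the asymptotic simplification above is valid; by \req{tech1}(iii) such an $s^*$ exists.

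For the second assertion, assumption \req{tech2} gives $\ln(1/s)\le\mu(s)/\beta$, while $\mu(s)\ge 1$ on $(0,s^*)$ implies $\ln\mu(s)\le\mu(s)$. Writing $-\ln s=\ln(1/s)$ and inserting these two inequalities into the definition of $Q(s)$ yields
\[
Q(s)\le \frac{2\mu(s)}{q-1}+\frac{q+3}{q-1}\mu(s)+\frac{q+3}{(q-1)\beta}\mu(s)=Q_0\mu(s),
\]
which is \req{estJj}. The only delicate step is the bound on $\varphi_b$: one must verify that the cap and outer-cylinder contributions are dominated by $b^{N-1}e^{2\mu(s)/(q-1)}$, which relies crucially on the monotonicity of $\mu$. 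Everything else is a direct application of Lemma \ref{L2.3} followed by elementary algebra.
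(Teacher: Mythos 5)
Your proof is correct and follows essentially the same route as the paper's: the same decomposition of $\partial\Omega_b(s)$, the same bound $\varphi_b(s)\le cb^{N-1}e^{2\mu(s)/(q-1)}$ using the monotonicity of $\mu$, the same application of Lemma \ref{L2.3} with $m=1$, $l=0$, $A=2/(q+3)$, and the identity $s^2/\omega(s)=s/\mu(s)$ to read off $Q(s)$. The paper merely groups the inner and outer cylinders into one piece $S_{b,1}$, and encodes the requirement $\mu(s)\ge 1$ via the condition $\mu(s^*)\ge(q+3)/2$, but these are cosmetic differences.
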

\begin{proof} Denote
$$S_{b,1}(s)=\{x:|x'|=s,\;|x_N|<b\}\cup\{x:|x'|=b-s,\; |x_N|<b\}$$
and
$$S_{b,2}(s)=\{x: s<|x'|<b-s,\;|x_N|=b\}.$$
Then
\begin{equation}\label{1.2.14}\BAL
&\int_{S_{b,1}}\bar
h(|x'|)^{-\frac2{q-1}}d\sigma\\&=2\gg_{N-1}(b-s)(\bar
h(s)^{-\frac2{q-1}}s^{N-2}+\bar
h(b)^{-\frac2{q-1}}(b-s)^{N-2})\\
&\leq  4b^{N-1}\gg_{N-1}\exp\frac{2\mu(s)}{q-1}\qquad
0<s<b/2,
\EAL\end{equation}
where $\gg_{N-1}$ denotes the area of the unit sphere in $\mathbb
R^{N-1}$.
Further, since $\mu$ is monotone decreasing,
\begin{multline}\label{1.2.16}
\int_{S_{b,2}}\bar h(|x'|)^{-\frac2{q-1}}d\sigma=
2\gg_{N-1}\int_{s}^{b-s}\exp\frac{2\mu(\rho)}{q-1}
\rho^{N-2}d\rho\\
\leq
2(N-1)^{-1}b^{N-1}\gg_{N-1}\exp\frac{2\mu(s)}{q-1}.
\end{multline}
By    \eqref{1.2.14} and \eqref{1.2.16}:
$$
\varphi_b(s)=\int_{S_b(s)}\bar h(|x'|)^{-\frac2{q-1}}d\sigma\leq
cb^{N-1}\exp\frac{2\mu(s)}{q-1}, \q 0<s<b/2,
$$
where $c=(4+2(N-1)^{-1})\gg_{N-1}$. This implies,
\begin{equation}\label{1.2.17}
\int_0^s\varphi_b(r)^{-\frac{q-1}{q+3}}dr\geq
c_1b^{-\frac{(N-1)(q-1)}{q+3}}\int_0^s\exp\Big(
-\frac{2\mu(r)}{q+3}\Big)dr,\quad c_1=c^{-\frac{q-1}{q+3}}.
\end{equation}

Let $s^*$ be the largest number in $(0,b/2)$ \sth
\begin{equation*}\BAL
&\diamond\q \text{$s^*\le\bar s$, ($\bar
s$ as in Lemma \ref{L2.3} for  $l=0$, $m=1$ and
$A=\frac{2}{q+3}$),}\\
&\diamond\q \text{$\mu(s^*)\geq A^{-1}=(q+3)/2$.}
\EAL\end{equation*}
Then \req{1.2.17} and  \req{1.2.19} imply
\begin{equation}\label{1.2.21}\BAL
\int_0^s\varphi_b(r)^{-\frac{q-1}{q+3}}dr \geq\;
&c_2b^{-\frac{(N-1)(q-1)}{q+3}}\frac{s^2}{\omega(s)}\exp\Big(-\frac{2\mu(s)}{q+3}\Big),\\
c_2=\;&c_1(q+3)/6, \EAL
\end{equation}
for all $s\in (0,s^*]$.
This inequality and  \eqref{1.2.9} imply \eqref{1.2.22}.
\medskip

Suppose now that the function $\mu(\cdot)$ given by \eqref{h-exp} satisfies \eqref{tech2}.
Since $\ln r\leq r $ for $r\geq 1$,
conditions \eqref{tech1}, \eqref{1.2.22} and \eqref{tech2} imply  \req{estJj}.
\end{proof}
%%%%%%%%%%%%%%%
\medskip

Next we  estimate energy integrals over domains of the form
\begin{equation}\label{1.2.23}
\Gw_b(\tau,\sigma):=\{x=(x',x_N):|x'|<\sigma,\;
|x_N|<b-\tau\}
\end{equation}
where $0<\sigma<b/2$, $0\leq\tau<b$.

Let $\eta\in C^\infty([0,\infty))$ be a monotone decreasing function such that

\begin{equation}\label{1.2.24}
\eta(s)=1\text{ if }s<1,\ \eta(s)=0\text{ if }s>2,\ \eta'(s)\leq 2
\end{equation}
and denote
$$\eta_\gs(s)=\eta(s/\gs).$$
We shall estimate the integrals,
\begin{equation}\label{1.2.25}
E_{b}(\tau,\sigma;v):=
\int_{\Gw_b(\tau,2\sigma)}\Big(\,\big|\nabla_x\big(\eta_\sigma(|x'|)
v\big)\big|^2+h(x)\eta_\sigma(|x'|)^2v^{q+1}\Big)\,dx.
\end{equation}
%%%%%%%%%%%%%%%%%%%%%%%
\begin{proposition}\label{L2.4}
Assume condition \eqref{tech1}. Let $s^*\in (0,b/2)$ be as in Proposition \ref{J0j}. Then the following inequality holds for   $0<\gs\le s^*$ and $\gs\le\tau<b$:
\begin{equation}\label{1.2.26}
E_b(\tau,\sigma;v)\leq
c\,\sigma \Big(-\frac{dE_b(\tau,\sigma;v)}{d\tau}\Big)+C_2(b)\exp
H(\sigma),
\end{equation}
 where $C_2(b):=cb^{\frac{2(N-1)}{q+1}}$,
\begin{equation}\label{Hsigma}\BAL
&H(\sigma)=2\frac{Q(\sigma)+\mu(\sigma)}{q+1} +\frac{(N-1)(q-1)-2(q+1)}{q+1}\ln\sigma\\
&=\frac{2\mu(\sigma)}{q-1}+\frac{2(q+3)}{q^2-1}\ln\mu(\gs)
-c^*_+\ln \sigma \EAL\end{equation} and
$$c^*=\frac{2(q+3)+2(q^2-1)-(N-1)(q-1)^2}{q^2-1}.$$

 If, in
addition, condition \eqref{tech2} holds then there exists a
constant $H_0$ depending only on $q$ and $\gb$ \sth
\begin{equation}\label{estEj}
H(\gs)\le H_0\mu(\gs),
\end{equation}
where
\begin{equation}\label{H0}
H_0=\frac2{q-1}+\frac{2(q+3)}{(q-1)(q+1)}+\frac{c^*_+}{\gb}.
\end{equation}
\end{proposition}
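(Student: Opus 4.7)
\emph{Plan.} I mimic the integration-by-parts argument of Proposition~\ref{L2.2}, now with test function $\eta_\sigma^2 v$ so as to localize near $F$. Multiplying equation \req{eqh} by $\eta_\sigma^2 v$, integrating over $\Omega_b(\tau,2\sigma)$ and using Green's identity together with the expansion of $|\nabla(\eta_\sigma v)|^2$, I obtain the identity
\[
E_b(\tau,\sigma;v) = \int_{\Omega_b(\tau,2\sigma)} v^2|\nabla\eta_\sigma|^2\,dx + \int_{\Sigma_b(\tau,2\sigma)}\eta_\sigma^2 v\,\prt_n v\,d\sigma',
\]
with $\Sigma_b(\tau,2\sigma) := \{|x_N|=b-\tau,\ |x'|<2\sigma\}$. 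The lateral boundary $\{|x'|=2\sigma\}$ drops out because $\eta_\sigma(2\sigma)=0$. The boundary term will supply the $c\sigma(-dE_b/d\tau)$ factor in \req{1.2.26}, while the interior cutoff integral yields the $C_2(b)\exp H(\sigma)$ term.

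The boundary integral is treated just as in Proposition~\ref{L2.2}, using H\"older's inequality with the triple of exponents $(2,q+1,2(q+1)/(q-1))$ (whose reciprocals sum to $1$). This gives
\[
\int_{\Sigma_b}\eta_\sigma^2 v|\nabla v|\,d\sigma' \leq A^{1/2}B^{1/(q+1)}\Phi^{(q-1)/(2(q+1))},
\]
where $A=\int\eta_\sigma^2|\nabla v|^2$, $B=\int\eta_\sigma^2 h v^{q+1}$, $\Phi=\int\eta_\sigma^2 h^{-2/(q-1)}$ on $\Sigma_b$. Clearly $B\leq -dE_b/d\tau$; the AM-GM bound $|\nabla(\eta_\sigma v)|^2\ge\tfrac12\eta_\sigma^2|\nabla v|^2-v^2|\nabla\eta_\sigma|^2$ yields $A\leq 2(-dE_b/d\tau)$ modulo a cutoff correction; and monotonicity of $\mu$ gives $\Phi\leq c\sigma^{N-1}\exp(2\mu(\sigma)/(q-1))$. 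A rescaled Young inequality with scaling parameter $\lambda=c\sigma$ then produces the term $c\sigma(-dE_b/d\tau)$ plus a residual of the form $c\sigma^{N-1-(q+3)/(q-1)}\exp(2\mu(\sigma)/(q-1))$, which will be dominated by the $C_2(b)\exp H(\sigma)$ coming from the other integral.

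For the interior cutoff integral, $|\nabla\eta_\sigma|\le 2/\sigma$ is supported on $\sigma<|x'|<2\sigma$, so H\"older with exponents $((q+1)/2,(q+1)/(q-1))$, combined with $\mu(r)\le\mu(\sigma)$ for $r\ge\sigma$, yields
\[
\int v^2|\nabla\eta_\sigma|^2 dx \le \frac{4}{\sigma^2}\paran{J_b(\sigma;v)}^{2/(q+1)}\paran{cb\sigma^{N-1}e^{2\mu(\sigma)/(q-1)}}^{(q-1)/(q+1)}.
\]
Plugging in $J_b(\sigma;v)\le cb^{N-1}\exp Q(\sigma)$ from Proposition~\ref{J0j}, the exponential combines to $\exp[2(Q(\sigma)+\mu(\sigma))/(q+1)]$; substituting the explicit formula for $Q(\sigma)$ reproduces the $\tfrac{2\mu(\sigma)}{q-1}+\tfrac{2(q+3)}{q^2-1}\ln\mu(\sigma)$ portion of $H(\sigma)$, while the algebraic $\sigma$-prefactor yields $-c^*\ln\sigma$ (weakened to $-c^*_+\ln\sigma$ using $\sigma<1$), and the $b$-powers collapse to $b^{2(N-1)/(q+1)}$ after absorbing a subleading factor into $c$.

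For the second inequality \req{estEj}: since $\mu(\sigma)\ge(q+3)/2\ge 1$ for $\sigma\in(0,s^*]$ one has $\ln\mu(\sigma)\le\mu(\sigma)$, while \req{tech2} gives $-\ln\sigma\le\mu(\sigma)/\beta$; summing the coefficients recovers $H_0$ as in \req{H0}. The principal obstacle is the rescaled Young step in the boundary estimate: one must select the scaling $\lambda$ and the distribution of $\eta_\sigma$-powers across the H\"older triple so that $-dE_b/d\tau$ emerges linearly with prefactor \emph{exactly} $c\sigma$, while the boundary residual still combines with the interior cutoff output into a single exponential with the precise coefficients of \req{Hsigma}. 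The cross term in the expansion of $|\nabla(\eta_\sigma v)|^2$ adds an elementary AM-GM complication that must be tracked carefully.
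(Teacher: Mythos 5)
Your overall decomposition (multiply by $\eta_\sigma^2 v$, integrate by parts, split into the cutoff volume integral and the top/bottom flux term) matches the paper, and your treatment of the interior cutoff integral is essentially the paper's computation \req{1.2.30}, including the bookkeeping that produces $H(\sigma)$ and the passage to $H_0\mu(\sigma)$ under \req{tech2}. Where you diverge is the flux term, and there your plan as written has a genuine gap.

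The paper does \emph{not} use the triple H\"older of Proposition~\ref{L2.2} on $S'_b(\tau,2\sigma)$. It writes the flux as $\int (\eta_\sigma v)\,\partial_{x_N}(\eta_\sigma v)\,dx'$ (using $\partial_{x_N}\eta_\sigma=0$), applies Cauchy--Schwarz, and then applies Poincar\'e's inequality on the disc $\{|x'|<2\sigma\}$ in $\BBR^{N-1}$ (valid because $\eta_\sigma v$ vanishes at $|x'|=2\sigma$): $\int(\eta_\sigma v)^2\leq (c\sigma)^2\int|\nabla_{x'}(\eta_\sigma v)|^2$. This gives $|\text{flux}|\leq c\sigma\int_{S'_b}|\nabla(\eta_\sigma v)|^2\,dx'\leq c\sigma\big(-\tfrac{d}{d\tau}E_b\big)$ directly, with no residual and no Young parameter to tune. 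The $\sigma$-factor is simply the Poincar\'e constant for a disc of radius $2\sigma$; nothing needs to be ``distributed'' among H\"older exponents.

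Your alternative route (triple H\"older plus weighted Young with $\lambda\sim\sigma$) can be made to work and would yield a boundary residual $c\,\sigma^{N-1-(q+3)/(q-1)}\exp(2\mu(\sigma)/(q-1))$, which indeed is dominated by $C_2(b)\exp H(\sigma)$ since one checks $c^*_+ > \tfrac{q+3}{q-1}-(N-1)$ (the difference equals $(q+2N-3)/(q+1)>0$ when $c^*\ge0$, and the inequality also holds when $c^*<0$). But the plan as written has a real obstruction: you set $A=\int_{S'_b}\eta_\sigma^2|\nabla v|^2$ and propose to control it by $-\tfrac{d}{d\tau}E_b$ via the AM--GM bound $\eta_\sigma^2|\nabla v|^2\leq 2|\nabla(\eta_\sigma v)|^2+2v^2|\nabla\eta_\sigma|^2$. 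The resulting correction $\int_{S'_b}v^2|\nabla\eta_\sigma|^2\,dx'$ is a \emph{surface} integral over the shell $\{\sigma<|x'|<2\sigma\}$, and none of Lemma~\ref{L2.1}, Proposition~\ref{L2.2}, or Proposition~\ref{J0j} controls it; the only handle on that shell is the volume estimate \req{1.2.22}, which does not bound a trace. The fix is to notice that, since $\eta_\sigma$ does not depend on $x_N$, the flux is $\int_{S'_b}(\eta_\sigma v)\,\partial_n(\eta_\sigma v)$, so the first H\"older factor should be $A=\int_{S'_b}|\partial_n(\eta_\sigma v)|^2\leq\int_{S'_b}|\nabla(\eta_\sigma v)|^2\leq -\tfrac{d}{d\tau}E_b$, which requires no AM--GM and no correction. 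With that change your route closes; the paper's Poincar\'e step is simply the cleaner and more elementary way to extract the factor $c\sigma$.
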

\begin{proof}
Multiplying equation \req{eqh}  by $\eta_\gs(|x'|)^2 v$ and
integrating by parts over $\Gw_b(\tau,2\gs)$ we obtain,
\begin{equation}\label{1.2.27}\BAL
\int_{\Gw_b(\tau,2\gs)}\nabla
v\cdot \nabla(v\eta_\sigma^2)dx+&\int_{\Gw_b(\tau,2\gs)}h(x)v^{q+1}\eta_\sigma^2dx\\
=& \int_{S'_b(\tau,2\gs)}\frac{\partial v}{\partial \vec
n}v\eta_\sigma^2dx', \EAL\end{equation} where
$S'_b(\tau,\gs)=\{x:|x'|<\sigma,\ |x_N|=b-\tau\}$.

We estimate the first term on the left hand side:
\begin{equation}\label{1.2.28}\BAL
&\int_{\Gw_b(\tau,2\gs)}\nabla
v\cdot \nabla(v\eta_\sigma^2)dx=\\
&\int_{\Gw_b(\tau,2\gs)}|\nabla(v\eta_\sigma)|^2dx-\int_{\Gw_b(\tau,2\gs)}v^2\absol{\nabla\eta_\gs}^2dx\geq\\
&\int_{\Gw_b(\tau,2\gs)}|\nabla(v\eta_\sigma)|^2dx -
4\sigma^{-2}\int_{\tilde \Gw_b(\tau,\gs)}v^2dx, \EAL\end{equation}
where
\begin{equation}\label{tlGw}
\tilde\Gw_b(\tau,\gs):=\{\sigma<|x'|<2\sigma,\ |x_N|<b-\tau\}.
\end{equation}
Using H\"{o}lder's inequality,  conditions \eqref{h-exp},
\eqref{tech1} and estimate \eqref{1.2.22} with $s=\gs$, we obtain:
\begin{equation}\label{1.2.30}\BAL
&\int_{\tilde \Gw_b(\tau,\gs)}v(x)^2dx\leq\\
\Big(&\int_{\tilde
\Gw_b(\tau,\gs)}v^{q+1}h(x)dx\Big)^{\frac2{q+1}}\Big(\int_{\tilde
\Gw_b(\tau,\gs)}h(x)^{-\frac2{q-1}}dx\Big)^{\frac{q-1}{q+1}}\leq\\
&
c'(b^{N-1}\exp Q(\sigma))^{\frac2{q+1}}\bar
h(\sigma)^{-\frac2{q+1}}\absol{\tilde
\Gw_b(\tau,\gs)}^{\frac{q-1}{q+1}}\leq\\
& cb^{\frac{2(N-1)}{q+1}}
\exp\Big(\frac{2Q(\gs)}{q+1}\Big)\exp\Big(\frac{2\mu(\gs)}{q+1}\Big)
\sigma^{\frac{(N-1)(q-1)}{q+1}} \EAL\end{equation} for
$\gs<\tau<b$ and $0<\gs<\min\{s^{\ast},\frac{b}{3}\}$. The
application of \eqref{1.2.22} here is justified because, for
$\tau$ and $\gs$ as above, $\tilde \Gw_b(\tau,\gs)\subset
\Gw_b(\gs)$.

Combining \req{1.2.27}  -- \req{1.2.30} we obtain,
\begin{equation}\label{1.2.29}\BAL
&\int_{\Gw_b(\tau,2\gs)}|\nabla(v\eta_\sigma)|^2dx+\int_{\Gw_b(\tau,2\gs)}h(x)v^{q+1}\eta_\sigma^2dx\leq\\
&\int_{S'_b(\tau,2\gs)}\frac{\partial v}{\partial \vec
n}v\eta_\sigma^2dx' + cb^{\frac{2(N-1)}{q+1}}
\exp\Big(\frac{2(Q(\gs)+\mu(\gs)}{q+1}\Big)
\sigma^{\frac{(N-1)(q-1)}{q+1}-2}, \EAL\end{equation} Next, by
H\"{o}lder's inequality,
\begin{multline*}\label{1.2.31'}
\bigg|\int_{S'_b(\tau,2\gs)}\frac{\partial v}{\partial\vec
n}v\eta_\sigma^2dx'\bigg|\leq
\int_{S'_b(\tau,2\gs)}\Big|\frac\partial{\partial
x_N}(v\eta_\sigma(|x'|))\Big|v\eta_\sigma\,dx'\\
\leq \bigg(\int_{S'_b(\tau,2\gs)}\big(\frac\partial{\partial
x_N}(v\eta_\sigma)\big)^2dx'\bigg)^{1/2}\bigg(\int_{S'_b(\tau,2\gs)}
(v\eta_\sigma)^2dx'\bigg)^{1/2}
\end{multline*}
and by  Poincar\'e's inequality in $S'_b(\tau,\gs)$,
$$\int_{S'_b(\tau,2\gs)}(v\eta_\sigma)^2dx'\leq (c_0\sigma)^2\int_{S'_b(\tau,2\gs)}
|\nabla_{x'}(v\eta_\sigma)|^2dx'.$$
Therefore
\begin{equation}\label{1.2.31}
\bigg|\int_{S'_b(\tau,2\gs)}\frac{\partial v}{\partial\vec
n}v\eta_\sigma^2dx'\bigg|\leq c\sigma\int_{S'_b(\tau,2\gs)}
|\nabla_{x}(v\eta_\sigma)|^2dx'.
\end{equation}
Since
$$
\frac{dE_b(\tau,\gs;v)}{d\tau}=
-\int_{S'_b(\tau,2\gs)}(|\nabla(v\eta_\sigma)|^2+h(x)v^{q+1}\eta_\sigma^2)dx'.
$$
inequalities
\eqref{1.2.29} and \eqref{1.2.31}  imply \eqref{1.2.26}.

Finally,  if \eqref{tech2} holds, \eqref{estEj} is obtained in the same way as
 \req{estJj}.
\end{proof}

%%%%%%%%%%%%%%%%%%%%%%%%%%%%%%%%%%%
\subsection{Part2.}

\noindent\emph{Notation.} Given $M>0$ and $\nu\in (0,1)$, let $s_\nu=s_\nu(M)$ be defined by,
\begin{equation}\label{shift}
   \exp(Q_0\mu(s_\nu(M))=\bar h(s_\nu(M))^{-Q_0}=M^\nu,
\end{equation}
where $Q_0$ is given by \req{estP0}.

\begin{lemma}\label{L-barEM}
Put
\begin{equation}\label{1.2.37}
\gamma:=\frac{2(q+1+\beta)-(N-1)(q-1)}{\beta Q_0(q+1)},
\end{equation}
where $\beta$ is a positive number satisfying \eqref{tech2} and
\begin{equation}\label{1.2.37'}
\nu_0:=\begin{cases} 1 \q&\text{if }\gamma\leq 0,\\
\frac{q-1}{\gamma} \q &\text{if }\gamma>0.
\end{cases}
\end{equation}
If
\begin{equation}\label{nu<nu0}
0<\nu<\min(\nu_0,1)
\end{equation}
then,
 \begin{equation}\label{1.2.38}
  E_b(0,s_\nu(M');U_M)\leq  2\big(  I_b(M)+ C_3(b) M^2M'^{q-1}\big)\q 1\le M'\le
  M,
\end{equation}
where
\begin{equation}\label{C_3}
   C_3(b):=c b^{\frac{2N+q-1}{q+1}}\bar
h(8b)^{\frac{2}{q+1}}.
\end{equation}
\end{lemma}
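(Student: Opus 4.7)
The plan is to decompose $E_b(0,s_\nu(M');U_M)$ into a bulk part, absorbed into $I_b(M)$, and a cut-off error produced by $\nabla\eta_\gs$; the error is then estimated by H\"older's inequality, \rlemma{L2.1}, and the calibration \eqref{shift} together with \eqref{tech2}.

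Using $|\nabla(\eta_\gs U_M)|^2 \le 2\eta_\gs^2|\nabla U_M|^2 + 2 U_M^2 |\nabla\eta_\gs|^2$ together with $\eta_\gs\le 1$ and $\Gw_b(0,2\gs)\subset \Gw_b$ (valid once $2\gs<b$), I would first obtain
\begin{equation*}
E_b(0,\gs;U_M) \le 2\, I_b(M) + 8\gs^{-2}\!\int_{\tilde\Gw_b(0,\gs)} U_M^2\,dx,
\end{equation*}
since $|\nabla\eta_\gs|^2 \le 4/\gs^2$ is supported in the corona $\tilde\Gw_b(0,\gs)$ defined in \eqref{tlGw}. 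The task is thus reduced to bounding $\gs^{-2}\int_{\tilde\Gw_b}U_M^2\,dx$ by $c\,C_3(b)\,M^2 M'^{q-1}$ when $\gs=s_\nu(M')$.

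For this I apply H\"older's inequality with exponents $\tfrac{q+1}{2}$ and $\tfrac{q+1}{q-1}$ to the factorisation $U_M^2 = (hU_M^{q+1})^{2/(q+1)} h^{-2/(q+1)}$. The first factor is controlled by \rlemma{L2.1}, producing $\le (C_1(b))^{2/(q+1)}M^2 = c\,b^{2N/(q+1)}\bar h(8b)^{2/(q+1)} M^2$. For the second factor, since $h=\bar h$ near the origin and $\mu$ is monotone decreasing, one has $\bar h(|x'|)^{-2/(q-1)}\le e^{2\mu(\gs)/(q-1)}$ on $\tilde\Gw_b$, while $|\tilde\Gw_b|\le c\,b\,\gs^{N-1}$. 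Collecting these estimates and multiplying by $8\gs^{-2}$ yields
\begin{equation*}
8\gs^{-2}\!\int_{\tilde\Gw_b}U_M^2\,dx \le c\,C_3(b)\,M^2\,\gs^\alpha\,e^{2\mu(\gs)/(q+1)},\q \alpha := \tfrac{(N-1)(q-1)}{q+1}-2.
\end{equation*}

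It remains to substitute $\gs=s_\nu(M')$. From \eqref{shift}, $\mu(\gs)=(\nu/Q_0)\ln M'$, so $e^{2\mu(\gs)/(q+1)}= M'^{2\nu/(Q_0(q+1))}$; from \eqref{tech2}, $\ln(1/\gs)\le \mu(\gs)/\gb$, hence $\gs\ge M'^{-\nu/(Q_0\gb)}$, so when $\alpha<0$ we have $\gs^\alpha \le M'^{-\alpha\nu/(Q_0\gb)}$. A short algebraic check shows $-\alpha/\gb + 2/(q+1) = \gamma Q_0$, so the total exponent of $M'$ equals $\nu\gamma$; since $\alpha<0$ forces $\gamma>0$, the hypothesis $\nu<\nu_0=(q-1)/\gamma$ bounds this exponent by $q-1$. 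When $\alpha\ge 0$ one instead uses $\gs^\alpha\le 1$, and the surviving exponent $2\nu/(Q_0(q+1))$ is $\le q-1$ as soon as $\nu<\min(\nu_0,1)$. The main delicate step is this bookkeeping that reconciles the sign of $\alpha$ with the definition of $\gamma$, confirming that the single condition $\nu<\min(\nu_0,1)$ is enough in both regimes.
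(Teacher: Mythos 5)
Your proof is correct and follows the same route as the paper's: split $E_b(0,\gs;U_M)$ into the cutoff-free energy (absorbed by $I_b(M)$) plus the commutator term $\gs^{-2}\int_{\tilde\Gw_b}U_M^2$, then estimate the latter by H\"older's inequality against $h U_M^{q+1}$ and $\bar h^{-2/(q-1)}$, use Lemma~\ref{L2.1} on the first factor and the measure of the corona $\tilde\Gw_b(0,\gs)$ on the second, and finally calibrate $\gs=s_\nu(M')$ via \eqref{shift} and \eqref{tech2}. Your constant $C_3(b)$ agrees with the paper's. One small but genuine improvement over the printed argument: the paper proceeds by substituting $s_\nu^{-1}\le\exp(\mu(s_\nu)/\gb)$ directly into the power $s_\nu^\alpha$, which only yields an upper bound when $\alpha<0$; when $\alpha\ge 0$ the substitution goes the wrong way, and the paper's intermediate inequality $I_b'\le C_3(b)M^2M'^{\nu\gg}$ in \eqref{1.2.36} is not actually established there (though the final conclusion \eqref{1.2.38} remains valid because the extra power of $s_\nu$ is simply $\le 1$). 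Your explicit case split --- using $s_\nu^\alpha\le 1$ when $\alpha\ge 0$ and checking the surviving exponent $2\nu/(Q_0(q+1))<q-1$ directly from $\nu<1$ --- repairs this, and the identity $-\alpha/\gb+2/(q+1)=\gg Q_0$ that you verify makes the bookkeeping clean. So this is essentially the paper's proof done a bit more carefully.
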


\begin{proof} Put
$$ I'_b(s,M):=\int_{\Gw_b} U_M^2|\nabla\eta_{s}|^2dx. $$
%Then
%$$I'_b(s_\nu(M'),M)\leq 4s_\nu^{-2}\int_{\tl\Gw_{b}(0,s_\nu)}U_M^2dx$$
%where $\tl\Gw_{b}(0,s_\nu)$ is defined as in \req{tlGw}.

Then,
 \begin{equation}\label{E<I+I'}
 \BAL
 &E_b(0,s_\nu(M'),U_M)\\
 &\le 2\int_{\Gw_b}(|\nabla(U_M)|^2\eta_{s_\nu}^2+h(x)U_M^{q+1}\eta_{s_\nu}^2)dx+
2\int_{\Gw_b} U_M^2|\nabla\eta_{s_\nu}|^2dx\\
&\le 2\big(I_b(M) + I'_b(s_\nu,M)\big), \q s_\nu=s_\nu(M').
\EAL\end{equation}

% By Lemma \ref{L2.1},
 %$$A_1\leq cb^N\bar h(8b) M^{q+1}.$$
By \req{1.2.24}, $\nabla\eta_{s_\nu}(|x'|)=0$ for $|x'|<s_\nu$ and
for $|x'|>2s_\nu$. Therefore, applying H\"older's inequality and
using  the monotonicity of $\bar h$ we obtain
 $$ \BAL
  I'_b(s_\nu(M'),M)&\leq 4s_\nu^{-2}\int_{\tl\Gw_{b}(0,s_\nu)}U_M^2dx \\
 &\leq 4s_\nu^{-2}\Big(\int_{\tl\Gw_{b}(0,s_\nu)} U_M^{q+1}h\,dx\Big)^{\frac{2}{q+1}}
 \Big(\int_{\tl\Gw_{b}(0,s_\nu)} \bar h(|x'|)^{\frac{2}{1-q}}dx\Big)^{\frac{q-1}{q+1}}\\
 &\leq cs_\nu^{-2}(b^N\bar h(8b) M^{q+1})^{\frac{2}{q+1}}
 \bar h(s_\nu)^{-\frac{2}{q+1}}s_\nu^{\frac{(N-1)(q-1)}{q+1}}b^{\frac{q-1}{q+1}}\\
 &=c(b^N\bar h(8b) )^{\frac{2}{q+1}}b^{\frac{q-1}{q+1}} M^2 s_\nu^{-2+\frac{(N-1)(q-1)}{q+1}}\exp{\frac{2\mu(s_\nu)}{q+1}}.
 \EAL$$
By \req{tech2}  and  \req{shift}
$$s^{-1}\leq \exp{(\mu(s)/\gb)},\q M'^{-\nu/Q_0}=\bar h(s_\nu)=\exp{(-\mu(s_\nu))}.$$
 Therefore the previous inequality yields
 $$  I'_b(s_\nu(M'),M)\leq  c(b^N\bar h(8b) )^{\frac{2}{q+1}}b^{\frac{q-1}{q+1}}M^2M'^{\frac{\nu}{\gb Q_0}
 (2-\frac{(N-1)(q-1)}{q+1})+\frac{2}{q+1}\frac{\nu}{ Q_0}}.$$
Hence
\begin{equation}\label{1.2.36}
 I'_b(s_\nu(M'),M)\leq C_3(b)M^2M'^{\nu\gg}
\end{equation}
with $\gg$ and $C_3(b)$ as in \eqref{1.2.37} and \eqref{C_3}.
By \eqref{nu<nu0} $\nu\gg\le q-1$. Therefore \req{E<I+I'} and \req{1.2.36} imply
\req{1.2.38}.
\end{proof}
%%%%%%%%%%%%%%%%%%%%%%%%% s_\nu(M)
\medskip

\noindent\emph{Notation. } For every $M>0$ and $0\leq s\leq b/2$ denote,
\begin{equation}\label{TnuM}
  T_b(s,M)=\{\tau:\,s\leq \tau< b,\;  E_b(\tau,s;U_M)\geq 2C_2(b)\exp(H_0\mu(s))\}
\end{equation}
where $C_2(b)$ is the constant in \req{1.2.26} and  $H_0$ is given by \req{H0}.

Note that $\tau\mapsto E_b(\tau,s;U_M)$ is continuous and non-increasing in the interval $[s,b]$.
Therefore, if
$$E_b(s,s;U_M)<2C_2(b)\exp(H_0\mu(s))$$
then $T_b(s,M)=\ems$. Put,
\begin{equation}\label{tausM}
\tau_b(s,M)=\begin{cases} s\q &\text{if }T_b(s,M)=\ems,\\
\sup T_b(s,M) \q&\text{otherwise}
\end{cases}
\end{equation}
and
\begin{equation}\label{taunu}
\tau_{b,\nu}(M',M):=\tau_b(s_\nu(M'),M).
\end{equation}
Since $\lim_{\tau\to b}E_b(\tau,s;U_M)\to 0$  it follows that
\begin{equation}\label{taunu<b}
  s_\nu(M')\leq \tau_{b,\nu}(M',M)<b.
\end{equation}
Furthermore,
\begin{equation}\label{E_M<}
  E_b(\tau_{b,\nu}(M',M), s_\nu(M');U_M)\leq 2C_2(b)\exp(H_0\mu(s_\nu(M')))
\end{equation}
and,  if $\tau_{b,\nu}(M',M)>s_\nu(M')$ then,
\begin{equation}\label{E_M>}
 E_b(\tau, s_\nu(M');U_M)\geq 2C_2(b)\exp(H_0\mu(s_\nu(M')))
 \end{equation}
for every $\tau\in (0,\tau_{b,\nu}(M',M)]$,  with equality for $\tau=\tau_{b,\nu}(M',M)$.
\medskip

\begin{comment}
If
\begin{equation}\label{EM0<}
 E_M(s_\nu(M),s_\nu(M))\leq 2c'b^{\frac{2(N-1)}{q+1}}\exp(H_0\mu(s_\nu(M))
\end{equation}
put
$$\tau_\nu(M,b)=s_\nu(M).$$
 Otherwise put
\begin{equation}\label{tauM}\BAL
  \tau_\nu(M,b)=\sup\{\tau:\, &s_\nu(M)\leq\tau<b/2,\\&E_M(\tau,s_\nu(M))\geq 2c'b^{\frac{2(N-1)}{q+1}}\exp(H_0\mu(s_\nu(M))\}.
\EAL\end{equation}
\end{comment}
%%%%%%%%%%%%%%%%%%%%%%%%
\begin{proposition}\label{IMtau} $(i)$ Let
$$b'_\nu(M',M):={b-\tau_{b,\nu}(M',M)}.$$
 Then
\begin{equation}\label{1.2.45}
\int_{\Gw_{b'_{\nu}(M',M)}}(|\nabla_xU_M|^2+h(x)U_M^{q+1})dx\leq
c_0(b^{N-1}M'^\nu+C_2(b)M'^{\frac{\nu
H_0}{Q_0}}).
\end{equation}

\noindent$(ii)$
Assume that
 \begin{equation}\label{nu2}
0<\nu\leq\frac{q+1}{4} \min(1,Q_0/H_0).
\end{equation}
where $H_0$ is given by \req{H0} and $Q_0$ is given by \req{estP0}. Let $a\in (1,2)$ and
assume that $M'$ is large enough so that,
\begin{equation}\label{Mcon2}
C_4(b):= c_0(b^{N-1}+C_2(b))/ \,C_1(b)\leq M'^{(q+1)/2a}
\end{equation}
where $C_1(b)$ and $C_2(b)$ are the constants in Lemma \ref{L2.1} and Proposition \ref{L2.4} respectively while $c_0$ is the constant in \req{1.2.45}.

Then
\begin{equation}\label{mainstep}
 I_{b'_\nu(M',M)} (M)=  \int_{\Gw_{b'_{\nu}(M',M)}}(|\nabla_xU_M|^2+h(x)U_M^{q+1})dx\leq C_1(b)M'^{\frac{q+1}{a}}.
\end{equation}

\end{proposition}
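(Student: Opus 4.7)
The plan is to split the box $\Gw_{b'_\nu(M',M)}=\{|x'|<b-\tau_{b,\nu},\;|x_N|<b-\tau_{b,\nu}\}$ into two pieces along the cylinder $|x'|=s_\nu(M')$ and bound each by a different energy estimate. On the inner piece
$$A_1:=\Gw_{b'_\nu(M',M)}\cap\{|x'|<s_\nu(M')\}$$
the cut-off $\eta_{s_\nu(M')}(|x'|)$ is identically $1$, and $A_1\sbs\Gw_b(\tau_{b,\nu}(M',M),2s_\nu(M'))$. Hence, in the definition \req{1.2.25} of $E_b$, the integrand reduces to $|\nabla U_M|^2+h\,U_M^{q+1}$ on $A_1$, giving
$$\int_{A_1}(|\nabla U_M|^2+h\,U_M^{q+1})\,dx\le E_b(\tau_{b,\nu}(M',M),s_\nu(M');U_M).$$
By \req{E_M<} and \req{estEj} the right hand side is at most $2C_2(b)\exp(H_0\mu(s_\nu(M')))$.

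For the outer annular piece
$$A_2:=\Gw_{b'_\nu(M',M)}\cap\{|x'|\ge s_\nu(M')\},$$
the inequality $\tau_{b,\nu}(M',M)\ge s_\nu(M')$ of \req{taunu<b} forces $b-\tau_{b,\nu}\le b-s_\nu(M')$, so $A_2\sbs\Gw_b(s_\nu(M'))$ in the sense of \req{1.2.7}. Thus the integral over $A_2$ is dominated by $J_b(s_\nu(M');U_M)$, and Proposition \ref{J0j} together with \req{estJj} yields the bound $cb^{N-1}\exp(Q_0\mu(s_\nu(M')))$. The defining identity \req{shift} now converts the two exponentials into powers of $M'$, namely $\exp(Q_0\mu(s_\nu(M')))={M'}^\nu$ and $\exp(H_0\mu(s_\nu(M')))={M'}^{\nu H_0/Q_0}$. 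Summing the $A_1$ and $A_2$ contributions yields \req{1.2.45}.

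For part $(ii)$, I combine the bound of part $(i)$ with the elementary inequality ${M'}^\nu+{M'}^{\nu H_0/Q_0}\le 2{M'}^{\nu\max(1,H_0/Q_0)}$, valid since $M'\ge 1$, absorbing the factor $2$ into $c_0$. Hypothesis \req{nu2} gives $\nu\max(1,H_0/Q_0)\le(q+1)/4$, while \req{Mcon2} rewrites as $c_0(b^{N-1}+C_2(b))\le C_1(b){M'}^{(q+1)/(2a)}$. Putting these together,
$$I_{b'_\nu(M',M)}(M)\le C_1(b){M'}^{(q+1)/(2a)+(q+1)/4}\le C_1(b){M'}^{(q+1)/a},$$
where the final inequality follows from $\tfrac{1}{2a}+\tfrac{1}{4}\le\tfrac{1}{a}\Llra a\le 2$. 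The only genuine obstacle is the geometric bookkeeping in part $(i)$: one must verify that $A_1$ lies inside the domain $\Gw_b(\tau_{b,\nu},2s_\nu(M'))$ on which $E_b$ is defined and that $A_2$ lies inside $\Gw_b(s_\nu(M'))$, both of which hinge on $\tau_{b,\nu}\ge s_\nu(M')$. Once these inclusions are in hand, the rest is a routine combination of the earlier estimates with the exponent algebra encoded in \req{shift}.
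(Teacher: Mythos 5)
Your proof is correct and follows essentially the same route as the paper: the same decomposition of $\Gw_{b'_\nu(M',M)}$ into the inner cylinder (controlled by $E_b$ via \req{E_M<}) and the outer region (controlled by $J_b$ via Proposition \ref{J0j}), the same translation of exponentials into powers of $M'$ through \req{shift}, and the same use of \req{nu2} and \req{Mcon2} for part (ii). The only cosmetic difference is in the exponent bookkeeping for (ii): you pass through $M'^{(q+1)/4}$ and then use $\tfrac{1}{2a}+\tfrac14\le\tfrac1a$, whereas the paper bounds both powers directly by $M'^{(q+1)/2a}$ — the two are equivalent given $a<2$.
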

%%%%%%%%%%%%%%%
\begin{proof} By \req{shift},
\begin{equation}\label{snuM}
  M'=\exp\Big({\frac{Q_0}{\nu}\mu(s_\nu(M'))}\Big).
\end{equation}
Therefore, by \req{E_M<},
\begin{equation}\label{1.2.44}
E_b(\tau_{b,\nu}(M',M),s_\nu(M');U_M)\leq 2C_2(b)M'^{\frac{\nu
H_0}{Q_0}}.
\end{equation}
By Proposition \ref{J0j} applied to the estimate of $J_b(s_\nu(M'),U_M)$, %  imply \req{1.2.45}.
\begin{equation}\label{1.2.44+}
J_b(s_\nu(M'),U_M)\leq cb^{N-1}\exp(Q_0\mu(s_\nu(M'))= cb^{N-1}M'^\nu.
\end{equation}
Inequality \req{taunu<b} implies that $b'_\nu(M',M)\leq b-s_\nu(M')$. Therefore
$$\Gw_{b'_\nu(M',M)}\subset \Gw_b(\tau_{b,\nu}(M',M),s_\nu(M'))\cup \overline{\Gw_b(s_\nu(M')})$$
(see \req{1.2.7} for definition of $\Gw_b(s)$). \Consy
$$
 I_{b'_\nu(M',M)} (M)\leq  E_b(\tau_{b,\nu}(M',M),s_\nu(M');U_M)+ J_b(s_\nu(M'),U_M).
$$
This inequality together with \req{1.2.44} and \req{1.2.44+}  imply \req{1.2.45}.

In view of \req{nu2} we have,
$$b^{N-1}M'^\nu+C_2(b)M'^{\frac{\nu
H_0}{Q_0}}\leq (b^{N-1}+C_2(b))M'^{(q+1)/2a}.$$
If $M'$ satisfies  \req{Mcon2}, this inequality and
 \req{1.2.45} imply \req{mainstep}.
\end{proof}

Next we derive an upper bound for $\tau_{b,\nu}(M',M)$ in terms of $s_\nu(M')$.
\begin{lemma}\label{tau1}
Suppose that  $0<\nu$ satisfies conditions \req{nu<nu0} and
\req{nu2} and that
\begin{equation}\label{Mcon1}
 M\geq \exp\Big({\frac{Q_0}{\nu}\mu(s^*)}\Big)
\end{equation}
where $s^*$ is as in Proposition \ref{L2.4}. Then
\begin{equation}\label{c_for_k}
\exp\Big(\frac{\tau_{b,\nu}(M',M)}{2cs_\nu(M')}\Big)\leq
c_1(I_b(M)+C_3(b)M^2M'^{q-1})C_2(b)^{-1} M'^{-\frac{\nu H_0}{Q_0}}.
\end{equation}
\end{lemma}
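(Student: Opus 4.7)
The plan is to treat $\sigma:=s_\nu(M')$ as a fixed parameter and convert the energy estimate of Proposition~\ref{L2.4} into a first-order ordinary differential inequality for $\tau\mapsto E_b(\tau,\sigma;U_M)$ on the subinterval where $E_b$ exceeds the threshold defining $\tau_{b,\nu}$. Then I integrate, and bound the two endpoint values of $E_b$ using Lemma~\ref{L-barEM} on one side and the very definition of $\tau_{b,\nu}$ on the other.

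First, the hypothesis \req{Mcon1} combined with \req{shift} and the monotonicity of $\mu$ places $\sigma\in(0,s^*]$, so Proposition~\ref{L2.4}, together with \req{estEj}, is applicable: for every $\tau\in[\sigma,b)$,
\begin{equation*}
E_b(\tau,\sigma;U_M)\le c\,\sigma\Bigl(-\tfrac{d}{d\tau}E_b(\tau,\sigma;U_M)\Bigr)+C_2(b)\exp(H_0\mu(\sigma)).
\end{equation*}
On the subinterval $[\sigma,\tau_{b,\nu}(M',M)]$, the inequality \req{E_M>} gives $E_b(\tau,\sigma;U_M)\ge 2C_2(b)\exp(H_0\mu(\sigma))$, so the constant term above is at most $\tfrac12 E_b$, leaving
\begin{equation*}
\tfrac12 E_b\le c\sigma\Bigl(-\tfrac{d}{d\tau}E_b\Bigr),\qquad\text{i.e.}\qquad -\tfrac{d}{d\tau}\ln E_b\ge \tfrac{1}{2c\sigma}.
\end{equation*}
Integrating from $\sigma$ to $\tau_{b,\nu}(M',M)$ yields
\begin{equation*}
\ln\frac{E_b(\sigma,\sigma;U_M)}{E_b(\tau_{b,\nu}(M',M),\sigma;U_M)}\ge\frac{\tau_{b,\nu}(M',M)-\sigma}{2c\sigma}.
\end{equation*}

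Second, I bound the numerator by the monotonicity of $E_b$ in $\tau$ and Lemma~\ref{L-barEM} (whose hypotheses \req{nu<nu0} and \req{nu2} are in force), obtaining
$E_b(\sigma,\sigma;U_M)\le E_b(0,\sigma;U_M)\le 2\bigl(I_b(M)+C_3(b)M^2M'^{q-1}\bigr)$.
For the denominator, the continuity of $\tau\mapsto E_b(\tau,\sigma;U_M)$ together with \req{E_M<} gives equality at the endpoint, so
\begin{equation*}
E_b(\tau_{b,\nu}(M',M),\sigma;U_M)=2C_2(b)\exp(H_0\mu(\sigma))=2C_2(b)\,M'^{\nu H_0/Q_0},
\end{equation*}
the last equality by \req{shift}. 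Substituting these bounds, exponentiating, and absorbing the bounded factor $\exp(1/(2c))$ (which arises from replacing $\tau_{b,\nu}-\sigma$ by $\tau_{b,\nu}$ in the exponent) into the constant $c_1$ produces \req{c_for_k}.

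The only step requiring care is the degenerate case $\tau_{b,\nu}(M',M)=\sigma$ (equivalently $T_b(\sigma,M)=\emptyset$), in which the claimed inequality is trivial once $c_1$ is chosen to dominate the bounded factor $\exp(1/(2c))$ relative to the ratio on the right; in the nontrivial case the persistence of the threshold $E_b\ge 2C_2(b)\exp(H_0\mu(\sigma))$ throughout the integration interval, which underpins the absorption argument, is immediate from \req{E_M>}. The main technical point is therefore the clean bookkeeping that turns the superlinear energy inequality of Proposition~\ref{L2.4} into a linear ODI for $\ln E_b$ by exploiting the very threshold that defines $\tau_{b,\nu}$.
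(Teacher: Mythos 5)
Your proof is correct and follows essentially the same route as the paper's: you use \req{E_M>} to absorb the additive constant in the differential inequality of Proposition~\ref{L2.4} into $\frac12 E_b$, obtaining the linear ODI $E_b\le 2cs_\nu(-E_b')$ on $(s_\nu,\tau_{b,\nu})$, then integrate with the upper bound from Lemma~\ref{L-barEM} at $\tau=s_\nu$ and the endpoint identity $E_b(\tau_{b,\nu},\sigma;U_M)=2C_2(b)\exp(H_0\mu(\sigma))=2C_2(b)M'^{\nu H_0/Q_0}$ from \req{E_M<} and \req{snuM}. The only cosmetic difference is that you work with $\ln E_b$ explicitly and track the factor $\exp(1/(2c))$ (from $\tau_{b,\nu}$ versus $\tau_{b,\nu}-s_\nu$) into $c_1$, exactly as the paper does implicitly.
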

\begin{proof}
 Since $\nu$ satisfies \req{nu2} and $1<a<2$,
$$0<Q_0(q+1)(1-\rec{2a})\leq Q_0(q+1)-H_0\nu.$$
By \eqref{1.2.38},
\begin{equation}\label{incon}\BAL
&E_b(\tau,s_\nu(M');M)\leq\\
 &E_b(0,s_\nu(M');M)\leq 2\big(  I_b(M)+ C_3(b) M^2M'^{q-1}\big) \q \forevery \tau\in (0,b)
\EAL\end{equation}
where $1<M'<M$.

%If $\tau_\nu=s_\nu$, \req{expEj} is an immediate consequence of this inequality.

 If $\tau_{b,\nu}\leqslant s_\nu$ inequality \eqref{c_for_k} is
 trivial. Therefore we may assume that
$$\tau_{b,\nu}(M',M)>s_\nu(M').$$
Temporarily denote
$$F(\tau)=E_b(\tau, s_\nu(M');M).$$
 By Proposition \ref{L2.4}, \req{Mcon1} and \req{E_M>},
\begin{equation}\label{hommEj}
F(\tau)\leq 2cs_\nu(M')\Big(-\frac{dF(\tau)}{d\tau}\Big)\quad\forall \tau:s_\nu(M')<\tau<\tau_{b,\nu}(M',M).
\end{equation}
Solving  this differential inequality with initial condition $F(s_\nu(M'))$ satisfying \req{incon} we obtain,
\begin{equation}\label{expEj}\BAL
E_b(\tau, s_\nu(M');M)\leq
c_1(I_b(M)+C_3(b)M^2M'^{q-1})\exp\Big(-\frac{\tau}{2cs_\nu}\Big)
\EAL\end{equation}
for every $\tau\in[s_\nu(M'),\tau_{b,\nu}(M',M)]$.
Combining \eqref{expEj} and  \eqref{E_M>} for $\tau=\tau_{b,\nu}(M',M)$ (in which case \eqref{E_M>} holds with equality) we obtain,
$$\BAL
&2C_2(b)\exp(H_0\mu(s_\nu(M')))\\
&\leq
c_1(I_b(M)+C_3(b)M^2M'^{q-1})\exp\Big(-\frac{\tau_{b,\nu}(M',M)}{2cs_\nu(M')}\Big).
\EAL$$
In view of \req{snuM} this inequality implies
\begin{equation}\label{c_k}\BAL
&\exp\Big(\frac{\tau_{b,\nu}(M',M)}{2cs_\nu(M')}\Big)\leq \\
&c_1(I_b(M)+C_3(b)M^2M'^{q-1})C_2(b)^{-1}\exp(-H_0\mu(s_\nu(M')))=\\
&c_1(I_b(M)+C_3(b)M^2M'^{q-1})C_2(b)^{-1} M'^{-\frac{\nu H_0}{Q_0}}.
%c_2&\kappa(b)\Big(\exp{\frac{Q_0}{\nu}\mu(s_\nu(M'))}\Big)^{q+1}b^{-\frac{2(N-1)}{q+1}}\exp(-H_0\mu(s_\nu(M'))).
\EAL\end{equation}

\end{proof}

\subsection{Part 3.} In this part of the proof we apply the previous estimates to a specific \seq $\{M_j\}$
defined below. As before $R$ is an arbitrary positive number and
we require that $R/4<b<R/2$.
\begin{proposition}\label{main}
Let
\begin{equation}\label{M_j}
   M_j=\exp(a^j), \q s_{j}:=s_\nu(M_j)
\end{equation}
where $s_\nu(\cdot)$ is  defined as in \req{shift} and
\begin{equation}\label{a=}
1<a<\min(1+\frac{\nu H_0}{2Q_0}, 2).
\end{equation}
 Put $u_j=U_{M_j}$.
Then there exists  $j_0\in \BBN$ \sth
\begin{equation}\label{I_b/2}
 \int_{\Gw_{b/2}}(|\nabla_xu_j|^2+h(x)u_j^{q+1})dx\leq   C_1(b)M_{j_0}^{q+1} \forevery j> j_0
\end{equation}
where $C_1(b)=cb^N\bar h(8b)$.
\end{proposition}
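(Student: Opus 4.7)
The strategy is to iterate the energy estimate \req{mainstep} of \rprop{IMtau}, exploiting the algebraic identity $M_\ell^{(q+1)/a}=\exp(a^{\ell-1}(q+1))=M_{\ell-1}^{q+1}$ valid for the sequence $M_\ell=\exp(a^\ell)$. Thus each application of \req{mainstep} advances the effective ``boundary height'' by one index in the exponential tower, at the price of shrinking the cube.

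I would set up a nested sequence of cubes $\Omega_b=\Omega_{B_0}\supset\Omega_{B_1}\supset\cdots$, with $B_k=B_{k-1}-\tau_k$, and prove by induction on $k$ the improved bound
\bel{indhyp}
I_{B_k}(u_j)\le C_1(b)\, M_{j-k}^{q+1}.
\ee
The base case $k=0$ is \rlemma{L2.1}. For the inductive step, I would rerun the chain \rlemma{L-barEM} $\to$ \rlemma{tau1} $\to$ \req{1.2.45}--\req{mainstep} inside $\Omega_{B_{k-1}}$, substituting \req{indhyp} at step $k-1$ in place of \rlemma{L2.1}. The crucial observation is that the factor $M^2$ in the term $C_3(b)M^2M'^{q-1}$ of \rlemma{L-barEM} arises as $(\int U_M^{q+1}h\,dx)^{2/(q+1)}\le (I_b(M))^{2/(q+1)}$; replacing $I_b(M)$ by the inductive bound $C_1(b)M_{j-k+1}^{q+1}$ improves this factor to $M_{j-k+1}^2$, so the ``effective height'' driving the machinery at step $k$ is $M_{j-k+1}$ rather than $M_j$. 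Taking $M'=M_{j-k+1}$, the analogue of \req{mainstep} then yields
$$ I_{B_k}(u_j)\le C_1(b)\, M_{j-k+1}^{(q+1)/a}=C_1(b)\, M_{j-k}^{q+1}. $$

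The hard part is controlling the accumulated shrinkage so that $\sum_{k=1}^{j-j_0}\tau_k\le b/2$ uniformly in $j$, for a suitable choice of $j_0$. Plugging \req{indhyp} into the iterated form of \req{c_for_k} and using \req{shift} in the form $\mu(s_\nu(M_\ell))=(\nu/Q_0)a^\ell$, a short computation yields
$$ \tau_k\le c_*\, s_\nu(M_{j-k+1})\, a^{j-k+1}=\frac{c_*Q_0}{\nu}\,\omega\!\bigl(s_\nu(M_{j-k+1})\bigr), $$
with $c_*$ independent of $j,k$; the condition \req{a=} on $a$ enters here to force the algebraic exponents into this form. Writing $t_\ell:=s_\nu(M_\ell)$, the task reduces to showing $\sum_{\ell\ge j_0+1}\omega(t_\ell)\to 0$ as $j_0\to\infty$. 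The regularity hypothesis \req{tech1'} ensures that consecutive $t_\ell$ are separated by a bounded multiplicative factor, permitting the Riemann-sum comparison $\omega(t_\ell)\le c'\int_{t_\ell}^{t_{\ell-1}}\mu(t)\,dt$; the Dini condition \req{Dini1}, which is exactly $\int_0^1\mu(t)\,dt<\infty$, then forces $\sum_{\ell\ge j_0+1}\omega(t_\ell)\le c'\int_0^{t_{j_0}}\mu(t)\,dt\to 0$ as $j_0\to\infty$. Choosing $j_0$ so that this tail is at most $b\nu/(2c_*Q_0)$, the induction terminates at $k=j-j_0$ with $B_{j-j_0}\ge b/2$, giving $I_{b/2}(u_j)\le I_{B_{j-j_0}}(u_j)\le C_1(b)M_{j_0}^{q+1}$. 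A routine bookkeeping check confirms that the threshold conditions \req{Mcon1}--\req{Mcon2} remain valid throughout, since the effective height $M_{j-k+1}\ge M_{j_0+1}$ is uniformly large and $B_k\in[b/2,b]$ keeps the constants $C_1,C_2,C_3$ comparable throughout the iteration.
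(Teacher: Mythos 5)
Your overall strategy matches the paper's exactly: iterate Proposition~\ref{IMtau} downward along the tower $M_\ell=\exp(a^\ell)$, exploit $M_\ell^{(q+1)/a}=M_{\ell-1}^{q+1}$, observe that the $M^2$ factor in Lemma~\ref{L-barEM} improves to $M_{i}^{2}$ once the improved energy bound on the shrunken cube is fed back in (with \req{a=} controlling the first step), and then close the loop by showing $\sum_k\tau_k<b/2$ via the Dini condition. The induction hypothesis $I_{B_k}(u_j)\le C_1(b)M_{j-k}^{q+1}$ is precisely the paper's \req{main-i}.

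There is one flaw in your justification of the Riemann-sum step. You invoke \req{tech1'} to claim that consecutive $t_\ell=s_\nu(M_\ell)$ are multiplicatively separated, but \req{tech1'} is a hypothesis of Theorem~\ref{Th.2}, not of Theorem~\ref{Th.1}; it is simply not available in the proof of Proposition~\ref{main}. Fortunately it is also unnecessary: since $\gw$ is nondecreasing and $\mu=\gw/\cdot$ is decreasing (\req{tech1}(i)--(ii)), one has $(\ln\mu)'\ge -1/t$, and integrating from $t_\ell$ to $t_{\ell-1}$ gives $\ln(t_{\ell-1}/t_\ell)\ge\ln\bigl(\mu(t_\ell)/\mu(t_{\ell-1})\bigr)=\ln a$, i.e.\ $t_{\ell-1}/t_\ell\ge a>1$. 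The paper sidesteps the issue entirely and more cleanly: from $\mu(s_i)=(\nu/Q_0)a^i$ and the monotonicity of $\gw$ it gets $s_i\le \ell a^{-i}$ with $\ell=Q_0\gw(s_0)/\nu$, so that $\sum_i\gw(s_i)\le\sum_i\gw(\ell a^{-i})$, which is then compared to $\int_0^{\gb_k}(\gw/t)\,dt$ for the explicitly geometric sequence $\ell a^{-i}$. You should replace the appeal to \req{tech1'} with one of these two elementary observations; otherwise the proof is sound and follows the paper's route.
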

%%%%%%%%%%%%%%%%%%
\begin{proof}
By \req{M_j} and \req{shift},
\begin{equation}\label{snuj}
a^j\nu/Q_0=\mu(s_{j}).
\end{equation}
%We  apply Proposition \ref{IMtau} and Lemma \ref{taunu<} to
Let $j_0$ be a positive integer to be determined later on. For each integer $j\geq j_0$ we define the set of pairs $$\{b_{i,j},\,\tau^{i,j}:\, i=j_0,\ldots,j\}$$
 by  induction as follows:
  $$\BAL
&  \tau^{j,j}=\tau_{b,\nu}(M_j,M_j), \q b_{j,j}=b-\tau^{j,j},\\
&\tau^{i,j}=\tau_{b_{i+1,j},\nu}(M_{i}, M_j),\q b_{i,j}=b_{i+1,j}-\tau^{i,j}, \q j_0\le  i< j.
\EAL$$
Thus
$$b_{i,j}=b-\sum_{k=i}^j \tau^{k,j}, \q j_0\le i< j.$$

We show below that if $j_0$ is sufficiently large then
\begin{equation}\label{<b/2}
 \sum_{i=j_0}^j \tau^{i,j} < b/2\qquad \forall\,j>j_0,
\end{equation}
which implies,
$$b/2<b_{i,j}.$$

Specifically we choose $j_0$  so that,
\begin{equation}\label{Mj0}\BAL
(i)\q & C_4(b/2)\leq M_{j_0}^{(q+1)/2a}\\
(ii)\q &\exp\Big(\frac{Q_0}{\nu}\mu(s^*)\Big)\leq M_{j_0},\\
(iii)\q &C_5(b):=c_1\frac{C_1(b)+C_3(b)}{C_2(b)}\leq  M_{j_0}^{q+1}
\EAL\end{equation}
with  $c_1$ as in \req{c_for_k}. For the definition of $C_1(b),\ldots, C_4(b)$ see  \req{1.2.2}, \req{1.2.26}, \req{C_3} and \req{Mcon2}.

\bcom@
\begin{equation}\label{Mj-con}\BAL
&(i)   &&C_4(b/2)\leq M_{j_0}^{(q+1)/2a},\\
&\;&& C_4(b):= c_0(b^{N-1}+C_2(b))/ \,C_1(b) \\
&(ii) && \exp\Big(\frac{Q_0}{\nu}\mu(s^*)\Big)\leq M_{j_0},\\
& (iii) && c_1\frac{C_1(b)+C_3(b)}{C_2(b)}\leq (q+1)\ln M_{j_0}
\EAL\end{equation}
with $c_0$ as in \req{1.2.45} and $c_1$ as in \req{c_for_k}.
@\end{comment}

We observe that $C_4(b)$ decreases as $b$ increases. Therefore
(assuming \req{<b/2}) condition (i) implies,
\begin{equation}\label{Mj1}
  C_4(b_{i,j})\leq M_{i}^{(q+1)/2a}, \q j_0\le i\le j,\q j_0\le j.
\end{equation}
The left hand side in condition \req{Mj0}(iii) increases as $b$ increases. Therefore
\begin{equation}\label{Mj3}
C_5(b_{i,j})\leq (q+1)\ln M_{i}, \q j_0\le i\le j,\q j_0\le j.
\end{equation}

\bcom @
for $j\geq j_0$,  $M_j$ satisfies conditions \req{Mcon1},  \req{Mcon2}, \req{Mcon3} when $b$ is replaced by $b_{i,j}$, $i=j_0,\cdots,j$.

First we require that
\begin{equation}\label{M2'}
   \kappa_0(b/2)\leq M_{j_0},
\end{equation}
for $\kappa_0$ as in \req{Mcon2}.
Since $\kappa_0$ is monotone decreasing \req{M2'} implies that \req{Mcon2} holds for $b_{i,j}$ if it is larger then $b/2$.

Secondly we require
\begin{equation}\label{M1,3}
a^{j_0}\nu/Q_0\geq \mu(s^*),\q a^{j_0}\geq  \frac{Q_0\ln \bar \kappa(b)}{Q_0(q+1)-H_0\nu}.
\end{equation}
Note that $\bar \kappa$ is monotone increasing so that the second inequality remains valid if $b$ is replaced by $b_{i,j}$ while the first inequality is independent of $b$.
@\end{comment}

Put $u_j=U_{M_j}$. Assuming that \req{<b/2} holds, we apply  Proposition \ref{IMtau} to the case where $b$ is replaced by  $b_{j_0+1,j}$ and  $M'=M_{j_0+1}$, $M=M_j$; we obtain,
\begin{equation}\label{Mjest}
  \int_{\Gw_{b_{j_0,j}}}(|\nabla_xu_j|^2+h(x)u_j^{q+1})dx\leq   C_1(b)M_{j_0}^{q+1}
\end{equation}
which implies \req{I_b/2}.

It remains to verify \req{<b/2}. To this end we prove the following estimate:
\begin{equation}\label{tauij<}
   \tau^{i,j}\leq \bar cQ_0(q+1)\frac{\gw(s_i)}{\nu}, \q j_0\le i\le j
\end{equation}
where $\bar c=4c$ ($c$ as in \req{c_for_k}).

The proof is by induction. We apply Lemma \ref{tau1} in the case where
$$b \text{ is replaced by }b_{i+1,j},\q M'=M_i,\q M=M_j,\q j_0\leq i\le j.$$
For $i=j$ we  put $b_{j+1,j}:=b$.  Note that, for $M\geq M_{j_0}$,  condition \req{Mj0}(ii) yields \req{Mcon1}.

Applying Lemma \ref{tau1} and Lemma \ref{L2.1} to the case $i=j$ we obtain
$$ \exp\frac{\tau^{j,j}}{2cs_j}\leq C_5(b)M_j^{q+1-\nu\frac{H_0}{Q_0}}.$$
\Consy, using \req{M_j} and condition \req{Mj0}(iii)
\begin{equation}\label{tau/s}\BAL
 \frac{\tau^{j,j}}{2cs_j}&\leq \ln C_5(b) +\big(q+1-\nu\frac{H_0}{Q_0}\big)\ln M_j\\
 &\leq 2(q+1)\frac{Q_0\mu(s_j)}{\nu}.
\EAL\end{equation}
For the last inequality recall that $s_j=s_\nu(M_j)$, which implies,
$$\ln M_j=\frac{Q_0\mu(s_j)}{\nu}.$$
Inequality \req{tau/s} implies \req{tauij<} for $i=j$.

Observe that $s_j\dar 0$ as $j\uar\infty$ and \consy, $\gw(s_j)\dar 0$. Therefore if $j_0$ is sufficiently large we have  $\tau^{j,j}<b/2$ and $b_{j,j}>b/2$.
By Proposition \ref{IMtau},
\begin{equation}\label{main-1}
  I_{b_{j,j}}(M_j)\leq C_1(b_{j,j})M_{j}^{(q+1)/a}\leq C_1(b)M_{j-1}^{q+1}.
\end{equation}
Here we use condition \req{Mj0}(i) and the fact that $b_{j,j}=b-\tau_{b,\nu}(M_j,M_j)$.

Now we apply Lemma \ref{tau1} for $i=j-1$, i.e.,  when $b$ is replaced by $b_{j,j}$ and $M'=M_{j-1}$, $M=M_j$.
This lemma, combined with \req{main-1}, yields
$$ \BAL&\exp\frac{\tau^{j-1,j}}{2cs_{j-1}}\leq \\
 c_1&\Big(I_{b_{j,j}}(M_j)+C_3(b_{j,j})M_j^2M_{j-1}^{q-1}\Big)
C_2(b_{j,j})^{-1}M_{j-1}^{-\nu\frac{H_0}{Q_0}}\leq\\
c_1&\Big(C_1(b_{j,j})M_{j-1}^{q+1}+C_3(b_{j,j})M_j^2M_{j-1}^{q-1}\Big)C_2(b_{j,j})^{-1}M_{j-1}^{-\nu\frac{H_0}{Q_0}}.
\EAL$$
By \req{a=},
\begin{equation}\label{Mj2}
 M_j^2M_{j-1}^{-\nu\frac{H_0}{Q_0}}\leq M_{j-1}^2.
\end{equation}
Therefore, similarly to \req{tau/s}, we obtain
\begin{equation}\label{tau/s1}\BAL
 \frac{\tau^{j-1,j}}{2cs_{j-1}}&\leq \ln C_5(b_{j,j}) +(q+1)\ln M_{j-1}\\
 &\leq2(q+1)\frac{Q_0\mu(s_{j-1})}{\nu},
\EAL\end{equation}
which, in turn,  implies \req{tauij<} for $i=j-1$.

This process can be repeated inductively for $i=j-2, j-3,\ldots, j_0$ provided that $b_{i+1,j}\geq b/2$. For each value of $i$ in this range we first apply Proposition \ref{IMtau} to obtain,
\begin{equation}\label{main-i}
  I_{b_{i+1,j}}(M_j)\leq C_1(b_{i+1,j})M_{i+1}^{(q+1)/a}\leq C_1(b)M_{i}^{q+1}.
\end{equation}
After that we apply Lemma \ref{tau1} combined with \req{main-i} to obtain \req{tauij<} for the respective value of $i$, always with the same constant $\bar c$.
Therefore, to complete the proof, it remains to be shown that there exists $j_0$ \sth:

$\q$ \emph{If  $j>j_0$, $j_0\leq k<j$ and $\tau^{i,j}$ satisfies \req{tauij<} for $k\leq i\leq j$ then,}
\begin{equation}\label{sum<b/2}
    \sum_{i=k}^j \tau^{i,j} < b/2.
\end{equation}

\bcom@
We apply Lemma \ref{tau1} with $b$ replaced by $b_{i+1,j}$, $M'=M_i$, $M=M_j$.

Further, by induction, applying Proposition \ref{IMtau} to $u_j$ in $\Gw\ind{b_{i,j}}$, $i=j_0,\ldots,j-1$ we obtain
\begin{equation}\label{Mest}
  I_{i,j}:= \int_{\Gw_{b_{i,j}}}(|\nabla_xu_j|^2+h(x)u_j^{q+1})dx\leq   CM_{i-1}^{q+1}.
\end{equation}

 We observe that estimate \req{1.2.43} depends only on conditions \req{Mcon1} and \req{Mcon3}. Therefore, if $j_0$ satisfies \req{M1,3} then, by  Lemma \ref{taunu<},
 $$\tau_{i,j}\leq C(N,q,\nu)\gw(s_i), \q j_0\le i< j.$$
@ \end{comment}

By \req{snuj} and \req{tech1}
$$s_i\leq (Q_0/\nu)a^{-i}\gw(s_i)\leq \ell a^{-i},\q \ell:=Q_0\gw(s_0)/ \nu.$$
Since, by assumption,   \req{tauij<} holds for $k\leq i\leq j$,
$$\BAL
\sum_{i=k}^j \tau_{i,j}\leq C(N,q,\nu)\sum_{i=k}^j \gw(s_i)\leq  C(N,q,\nu)\sum_{i=k}^j \gw(\ell a^{-i})
\EAL$$
Further, using the monotonicity of $\gw$,
$$\sum_{i=k}^j \gw(\ell a^{-i})\leq \int_{k}^j \gw(\ell a^{-s})ds<\int_0^{\gb_k}\frac{\gw(r)}{r}dr$$
where $\gb_k=\ell a^{-k}$.
Because of the Dini condition, the last integral tends to zero when $\gb_k\to 0$. Therefore, if $j_0$ is sufficiently large (depending only on $N,q,\nu$ and $a$) \req{sum<b/2} holds for all $k\geq j_0$.
\end{proof}

\noindent\emph{Completion of proof of Theorem \ref{Th.1'}}.\hskip 2mm Since $U_M$ increases as $M$ increases
$$U^R:=\lim_{M\tin} U_M=\lim_{j\tin}u_j.$$

The function $V_M$ defined by
$$V_M(x)=U_M(x',x_N+R)$$ is a solution of \req{eqh} in the ball $B_R(x^R)$ where $x^R=(0,R)$.
If $v$ is a solution of \req{eqh} in $\RN_+$ then
$$v\leq V^R:=\lim_{M\tin}V_M \q \text{in }B_R(x^R).$$
It remains to prove that $V^R$ is bounded in a neighborhood of the point $(0,R)$ which is equivalent to $U^R$  being bounded in a neighborhood of the origin.

By interior elliptic estimates, \req{I_b/2} implies that
\begin{equation}\label{1.2.70}
\sup_{j_0\le j}\int_{\Omega_{b/3}}|u_j|^2dx<\infty.
\end{equation}
Since
$h(x)\geq0$, $u_j$ is subharmonic in $\Gw_b$. Therefore \req{1.2.70} implies
\begin{equation}\label{1.2.71}
 \sup\{u_j(x):\,j_0\le j,\; x\in \Gw_{b/4}\}<\infty.
\end{equation}
\qed

\section{Proof of Theorem \ref{Th.2}}

Put

$$r_j:=2^{-j},\qq \Gw_j=\{(x',x_N):\,|x'|<r_j,\, 0<x_N\}, \q j=1,2,\ldots\,.$$
\bcom@
For every $M>0$ denote by $w_{j,M}$ the solution of \req{eqh} in $\RN_+$ \sth
\begin{equation}\label{vjM}
  w_{j,M}(x',0)=\begin{cases} M &\text{if }|x'|<r_{j+1}\\ 0 &\text{otherwise.}\end{cases}
\end{equation}
We show that, if \req{tech1} and \req{nonDini} hold then there exists an increasing \seq $\{M_j\}$ tending to infinity \sth
\begin{equation}\label{vjMj}
 \lim_{j\tin} w\ind{j,M_j}(0,x_N)=\infty \forevery x_N>0.
\end{equation}
@\end{comment}
Further denote,
\begin{equation}\label{aj}
 a_j:=\exp\big( -\mu(r_j)\big), \q   A_j=\big( a_j r_j^2\big)^{\rec{q-1}}
\end{equation}
and, for $x'\in \BBR^{N-1}$,
\begin{equation}\label{ggj}
\gg_j(x')=\begin{cases}A_j^{-1}\gf_1(x'/r_{j+1}) &\text{if }|x'|<r_{j+1}\\
 0&\text{if }|x'|\geq r_{j+1}\end{cases}
\end{equation}
where $\gf_1$ the first eigenfunction of the Dirichlet problem to
$-\Gd_{y'}$ in $B_1^{N-1}$ normalized by $\gf_1(0)=1$. Recall that
$\mu(s)=\gw(s)/s$.

We consider the  \bvp{s}
\begin{equation}\label{bvpj}\BAL
 -\Gd u_j +a_ju_j^q&=0 &&\q\text{in }\Gw_j,\\
 u_j(x)&=0 &&\q\text{on }\{x\in \bdw_j: x_N>0\},\\
 u_j(x',0)&=\gg_j(x') &&\q\text{for } |x'|\leq r_j.
\EAL\end{equation}

 In view of \req{tech1}, $\{a_j\}$  is a  decreasing \seq converging to zero and
$$a_j=\sup_{s\in (0,r_j)} \exp\big( -\mu(s)\big).$$
Therefore, for every $x_N>0$, $\{u_j(0,x_N\}$ is an increasing \seq
and $u_j$ is a subsolution of the problem
\begin{equation}\label{bvpw}\BAL
 -\Gd w +h(x)w^q&=0 &&\q\text{in }\Gw_j,\\
 w(x)&=0 &&\q\text{on }\{x\in \bdw_j: x_N>0\},\\
 w(x',0)&=\gg_j(x') &&\q\text{for } |x'|\leq r_j.
\EAL\end{equation}

The proof of Theorem \ref{Th.2} is based on the following:
\begin{proposition}\label{uj_infty} For every $x_N>0$,
$$\lim_{j\tin}u_j(0,x_N)=\infty.$$
\end{proposition}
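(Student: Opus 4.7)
The strategy is to reduce the proposition to the analysis of a single $j$-independent reference problem via a blow-down, derive a quantitative lower bound for the reference profile, and then use the non-Dini hypothesis to upgrade this to divergence at every point of $F$.

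First, I would rescale by setting $v_j(y) := A_j\, u_j(r_{j+1} y)$ on $Q := \{|y'| < 2,\, y_N > 0\}$. From \eqref{aj} and $r_j = 2 r_{j+1}$ one has $A_j^{q-1} = a_j r_j^2 = 4 a_j r_{j+1}^2$, which reduces \eqref{bvpj} to the $j$-independent problem
\begin{equation*}
-\Delta v + \tfrac14 v^q = 0 \txt{in} Q,\q v(y',0) = \phi_1(y') \txt{for} |y'|<1,\q v \equiv 0 \txt{on} \{|y'|=2,\,y_N>0\}.
\end{equation*}
By uniqueness of the bounded positive solution, $v_j \equiv W$ for a single function $W = W(N,q)$, so that $u_j(0, x_N) = A_j^{-1} W(0, 2^{j+1} x_N)$. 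Next, I would build an explicit sub-solution of the reference problem of the form $\underline W(y) := \kappa\, \phi_1(2y')\, e^{-\Lambda y_N}$ supported on $\{|y'| < 1/2\}$. Choosing $\Lambda^2 \ge 4\lambda_1(B_1^{N-1}) + \tfrac14 \kappa^{q-1}$ gives $-\Delta \underline W + \tfrac14 \underline W^q \le 0$ (using $\phi_1 \le 1$ and hence $\phi_1^q \le \phi_1$), and the radial-decreasing property of $\phi_1$ gives $\underline W \le W$ on the parabolic boundary of $Q$. Comparison then yields $W(0, y_N) \ge \kappa\, e^{-\Lambda y_N}$.

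Combining with $A_j^{-1} = e^{\mu(r_j)/(q-1)} 2^{2j/(q-1)}$ and $\mu(r_j) = \omega(r_j)\, 2^j$ one obtains
\begin{equation*}
\ln u_j(0, x_N) \ge \ln \kappa + \frac{2j \ln 2}{q-1} + 2^j\!\left[\frac{\omega(r_j)}{q-1} - 2\Lambda\, x_N\right].
\end{equation*}
Since $\omega$ is non-decreasing, $\omega(r_j) \to \omega(0^+) \le \omega(1) < \infty$, so this single-scale bound forces $u_j(0, x_N) \to \infty$ only for $x_N$ below a universal threshold. To cover every $x_N > 0$, I plan to upgrade to a multi-scale argument: superpose the sub-solutions $\underline W_k$ built at scales $r_k$ for $k \le j$, each essentially supported in the dyadic ring $\Omega_k \setminus \Omega_{k+1}$ and matched to $u_j$ via the monotonicity of $\mu$ in \eqref{tech1}(ii) together with the growth conditions \eqref{tech1'} and \eqref{tech1''}. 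The composite sub-solution's value at $(0, x_N)$ is bounded below by a constant multiple of $\sum_{k \le j} \omega(r_k)$, and since this sum is comparable to $\int_0^1 \omega(t)/t\, dt$ --- which diverges by \eqref{nonDini} --- we conclude $u_j(0, x_N) \to \infty$.

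The main obstacle is the multi-scale construction itself: verifying that a superposition of single-scale sub-solutions built at different dyadic scales remains a sub-solution of \eqref{bvpj} throughout $\Omega_j$. This requires careful boundary-value matching at every interface $|x'| = r_k$ together with the sub-solution inequality in each annular ring, where the equation for $u_j$ uses the uniform coefficient $a_j$ but each $\underline W_k$ is naturally adapted to the coefficient $a_k \gg a_j$. Conditions \eqref{tech1'} and \eqref{tech1''} are precisely what allow the dyadic scales to decouple cleanly enough for the bookkeeping to close, while \eqref{nonDini} is what converts accumulated single-scale contributions into an unbounded pointwise limit along the spine $F$.
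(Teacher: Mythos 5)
Your rescaling of \eqref{bvpj} to a $j$-independent reference problem is correct, and the explicit comparison-function argument giving $W(0,y_N)\geq\kappa e^{-\Lambda y_N}$ is a legitimate (and more elementary) substitute for the lower half of the Brada asymptotics \eqref{asymp1}--\eqref{uj>} used in the paper. You also correctly identify that the resulting single-scale estimate cannot alone yield the proposition: if $\omega(0^+)=0$ --- which is allowed by \eqref{tech1} and compatible with \eqref{nonDini} --- the bracket $\bigl[\omega(r_j)/(q-1)-2\Lambda x_N\bigr]$ is eventually negative for \emph{every} $x_N>0$, so the bound is vacuous for large $j$.

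The proposed repair, however, does not work, and this is a genuine gap rather than a bookkeeping issue. You want to superpose sub-solutions $\underline W_k$ ``supported in the dyadic ring $\Omega_k\setminus\Omega_{k+1}$'' for $k\le j$ and compare their sum with $u_j$ in $\Omega_j$. But for $k<j$ the annulus $\Omega_k\setminus\Omega_{k+1}=\{r_{k+1}<|x'|<r_k,\,x_N>0\}$ lies entirely \emph{outside} $\Omega_j=\{|x'|<r_j,\,x_N>0\}$, since $r_j<r_{k+1}$; the scale-$k$ profile has trace at $x_N=0$ of size $\sim A_k^{-1}\gg A_j^{-1}$ on a set strictly larger than the support of $\gamma_j$, and it does not vanish on $|x'|=r_j$. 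No maximum-principle comparison with $u_j$ in $\Omega_j$ is possible, so the ``composite sub-solution'' cannot be bounded above by $u_j$, and the claimed lower bound $\sum_{k\le j}\omega(r_k)$ for $u_j(0,x_N)$ does not follow. The missing idea --- which is the crux of the paper's proof --- is a translation in the $x_N$-direction, not a superposition over $|x'|$. From \eqref{uj>} and the definition \eqref{tauj}--\eqref{tauj'} of the shift $\tau_j$, one obtains the trace inequality $\gamma_{j-1}(x')\leq u_j(x',\tau_j)$, hence (since $a_{j-1}>a_j$) $u_{j-1}(x',x_N)\leq u_j(x',x_N+\tau_j)$ by comparison; iterating gives $u_k(x',x_N)\leq u_m\bigl(x',x_N+\sum_{j=k+1}^m\tau_j\bigr)$, and the non-Dini hypothesis enters precisely to force $\sum\tau_j=\infty$ via $\tau_j\asymp\omega(r_j)$ (this is where \eqref{tech1'} is actually used, in \eqref{wrj}--\eqref{tauj<}; \eqref{tech1''} plays no role in this proposition). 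Your outline contains no analogue of this translation step, and without it the argument does not close. Note also that the paper needs the \emph{two-sided} estimate \eqref{uj>} (upper and lower bounds with the same exponent $\sqrt{\lambda_1}$), both to calibrate $\tau_j$ and in the concluding contradiction argument; your one-sided lower bound with the worse exponent $\Lambda$ would not suffice to set up the iteration.
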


%For the proof of this proposition we need some results of
In the next lemma we collect several results of  Brada \cite{Brd} that are used in the proof of this proposition.

\begin{lemma}\label{Brada} Let $a$ be a positive number, let $q>1$ and let $f$ be a positive function in $L^\infty(B_1^{N-1})$, where $B_1^{N-1}$ denotes the unit ball in $\BBR^{N-1}$ centered at the origin.

Consider  the  problem
\begin{equation}\label{bvp0}\BAL
-\Gd u + b\, u^q&=0 &&\q \text{in } D_0\\
u(y)&=0 &&\q\text{for }y\in \prt D_0\,:\,0<y_N,\\
u(y',0)&=f(y') &&\q\text{for } |y'|\leq 1,
\EAL\end{equation}
where
$$D_0=\{y=(y',y_N)\in \RN: |y'|<1,\; 0<y_N\}.$$

If $u$ is the solution of this problem then there exists a number $\ga>0$ \sth
\begin{equation}\label{asymp1}
\lim_{y_N\to\infty}  \exp\big(\sqrt{\gl_1}y_N\big)u(y)=\ga\gf_1(y')
\end{equation}
uniformly in $B_1^{N-1}$. Here $\gl_1$ is the first eigenvalue and $\gf_1$ the corresponding eigenfunction of $-\Gd_{y'}$ in $B_1^{N-1}$ normalized by $\gf_1(0)=1$.

The limit $\ga$ satisfies
\begin{equation}\label{ga-est}
  \ga\leq c b^{-\rec{q-1}}\sup f.
\end{equation}
\end{lemma}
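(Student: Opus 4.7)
The statement packages three facts: existence/uniqueness of $u$, the asymptotic profile \eqref{asymp1}, and the upper bound \eqref{ga-est}. I would organize the proof in four steps, essentially following Brada.

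\textbf{Step 1 (Existence and uniqueness).} I would use monotone iteration with subsolution $\underline u\equiv 0$ and supersolution given by the harmonic extension $H$ of $f$ to $D_0$ (with $H=0$ on the lateral boundary). Since $b,H\ge 0$ one has $-\Gd H+bH^q\ge 0$, so $H$ is a supersolution; $0$ is obviously a subsolution. Standard sub/super iteration produces a bounded solution $u$ with $0\le u\le H$. Uniqueness follows from the monotonicity of $t\mapsto bt^q$ combined with the maximum principle applied to the difference of two bounded solutions.

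\textbf{Step 2 (Exponential decay at infinity).} Let $\{\gf_k,\gl_k\}_{k\ge 1}$ be the Dirichlet eigenpairs of $-\Gd_{y'}$ on $B_1^{N-1}$ with $0<\gl_1<\gl_2\le\cdots$. Expanding $H$,
\begin{equation*}
H(y)=\sum_{k\ge 1}c_k\gf_k(y')e^{-\sqrt{\gl_k}y_N},\qquad c_k=\langle f,\gf_k\rangle/\|\gf_k\|_{L^2}^2,
\end{equation*}
so $H(y)\le C\|f\|_\infty e^{-\sqrt{\gl_1}y_N}$. Since $0\le u\le H$, $u$ decays to $0$ uniformly in $y'$ as $y_N\to\infty$ at rate at least $\sqrt{\gl_1}$.

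\textbf{Step 3 (Identification of the leading mode).} Define the Fourier coefficients $a_k(y_N):=\int_{B_1^{N-1}}u(y',y_N)\gf_k(y')\,dy'$. Multiplying \eqref{bvp0} by $\gf_k$, integrating over $B_1^{N-1}$ (the lateral Dirichlet data make the boundary term vanish) and using $-\Gd_{y'}\gf_k=\gl_k\gf_k$ yields
\begin{equation*}
a_k''(y_N)-\gl_k a_k(y_N)=b\int_{B_1^{N-1}}u(y',y_N)^q\gf_k(y')\,dy'=:g_k(y_N),\qquad a_k(\infty)=0.
\end{equation*}
By Step 2, $|g_k(y_N)|\le Ce^{-q\sqrt{\gl_1}y_N}$, which is integrable against $e^{\sqrt{\gl_k}y_N}$ because $q>1$. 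Solving this linear ODE with the decay condition by variation of parameters (Green's function $-\tfrac{1}{2\sqrt{\gl_k}}e^{-\sqrt{\gl_k}|y_N-s|}$) gives $a_k(y_N)=\ga_k e^{-\sqrt{\gl_k}y_N}+r_k(y_N)$, where the remainder $r_k$ decays at the strictly faster rate $\min(q\sqrt{\gl_1},\sqrt{\gl_1}+\vge)$ for some $\vge>0$. For $k\ge 2$ we have $\sqrt{\gl_k}>\sqrt{\gl_1}$, so these modes are $o(e^{-\sqrt{\gl_1}y_N})$. Summing the expansion,
\begin{equation*}
u(y',y_N)=\ga\,\gf_1(y')e^{-\sqrt{\gl_1}y_N}+o(e^{-\sqrt{\gl_1}y_N})
\end{equation*}
uniformly in $y'\in B_1^{N-1}$, which is \eqref{asymp1} with $\ga=\ga_1/\|\gf_1\|_{L^2}^2$ after adjusting for the normalization $\gf_1(0)=1$.

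\textbf{Step 4 (Upper estimate on $\ga$).} I would combine scaling with an explicit supersolution. Let $v:=b^{1/(q-1)}u$, so that $v$ solves $-\Gd v+v^q=0$ with bottom data $\tl f=b^{1/(q-1)}f$. Its asymptotic coefficient $\tl\ga$ satisfies $\ga=b^{-1/(q-1)}\tl\ga$. For $v$ construct a supersolution of the form $\Psi(y)=M\tl\gf(y')e^{-\sqrt{\tl\gl}y_N}$, where $\tl\gf,\tl\gl$ are the first eigenpair of a ball slightly larger than $B_1^{N-1}$ (so that $\tl\gf>0$ on $\overline{B_1^{N-1}}$). Then $-\Gd\Psi=0$ and $\Psi^q\ge 0$, so $\Psi$ is a supersolution; choosing $M=c\sup\tl f=cb^{1/(q-1)}\sup f$ makes $\Psi\ge v$ on the bottom and on the lateral boundary. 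Comparison gives $v\le\Psi$, hence $\tl\ga\le cM$ and $\ga\le cb^{-1/(q-1)}\sup f$, which is \eqref{ga-est}.

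\textbf{Main obstacle.} Step 4 is the delicate point: the natural first eigenfunction $\gf_1$ vanishes on $\prt B_1^{N-1}$, so the ansatz $M\gf_1(y')e^{-\sqrt{\gl_1}y_N}$ does not dominate $f$ on the bottom when $f$ is nonzero near the lateral boundary. Working on a slightly enlarged ball fixes this but shifts the exponential rate from $\sqrt{\gl_1}$ to $\sqrt{\tl\gl}<\sqrt{\gl_1}$, so one must either pass to a limit by shrinking the enlargement, or observe that only the value on the lateral boundary of $B_1^{N-1}$ matters and handle the two regions (interior versus near-lateral-boundary) separately. A further subtlety is controlling the nonlinear feedback in Step 3: one must verify that the higher modes really decay strictly faster, which requires the integrability of $g_k$ against $e^{\sqrt{\gl_k}y_N}$ granted by $q>1$.
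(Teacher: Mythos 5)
Your proposal reconstructs an entire proof of Brada's results from scratch, whereas the paper treats the lemma as a citation: \eqref{asymp1} (for some $\ga\in\BBR$) is quoted from Theorem~4 of \cite{Brd}, the estimate \eqref{ga-est} is quoted from Proposition~1 of \cite{Brd}, and the only thing the paper argues directly is the strict positivity $\ga>0$. That positivity argument is short and is exactly the piece your proof omits: by Brada's remark (p.~357), if $\ga=0$ then the leading asymptotic is governed by a higher eigenmode, i.e.\ $e^{\sqrt{\gl_k}y_N}u(y)\to\gf_k(y')$ for some $k>1$; but $\gf_k$ changes sign while $u>0$, a contradiction. Your Step~3 produces $u=\ga\gf_1 e^{-\sqrt{\gl_1}y_N}+o(e^{-\sqrt{\gl_1}y_N})$ with $\ga\ge0$ (because $a_1(y_N)=\int u\gf_1>0$), but nothing in Steps~1--3 rules out $\ga=0$, which is consistent with $a_1$ decaying at the faster rate $q\sqrt{\gl_1}$. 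Since \eqref{asymp1} asserts $\ga>0$ and this is what the paper needs (to get a uniform two-sided bound \eqref{uj>}), this is a genuine gap, not a technicality.

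Step~4 has two separate problems. First, an algebra slip: with $v=b^{1/(q-1)}u$ and $\tl f=b^{1/(q-1)}f$, if you could show $\tl\ga\le cM$ with $M=c\sup\tl f=c\,b^{1/(q-1)}\sup f$, then $\ga=b^{-1/(q-1)}\tl\ga\le c^2\sup f$: the powers of $b$ cancel and you do \emph{not} get \eqref{ga-est}. Second, the comparison with a harmonic barrier $\Psi=M\tl\gf(y')e^{-\sqrt{\tl\gl}y_N}$ on a slightly larger ball has $\sqrt{\tl\gl}<\sqrt{\gl_1}$, so $v\le\Psi$ implies nothing about $\tl\ga=\lim v\,e^{\sqrt{\gl_1}y_N}/\gf_1$: the bound $\Psi e^{\sqrt{\gl_1}y_N}\to\infty$. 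You flag this as the ``main obstacle,'' but the fixes you sketch (shrinking the enlargement, splitting into regions) are not carried out, and they cannot in any case produce the factor $b^{-1/(q-1)}$, because a purely harmonic supersolution never sees the absorption term. The $b$-dependence in \eqref{ga-est} is Keller--Osserman in nature: one needs an interior bound of the form $v(y)\le C\,\mrm{dist}(y,\prt D_0)^{-2/(q-1)}$ coming from $-\Gd v+v^q=0$ itself, applied at some cross-section $y_N=y_N^0$, and then a linear (harmonic) comparison in $\{y_N>y_N^0\}$ with the resulting bounded lateral data. Without that nonlinear ingredient, Step~4 cannot close.

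Steps~1 and~2 are fine as sketches, and the Fourier-mode ODE analysis in Step~3 is a reasonable route to the qualitative expansion (modulo uniform summation over $k$, which you do not address). But as written the proposal does not prove the statement: it establishes neither $\ga>0$ nor \eqref{ga-est}.
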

\proof By \cite[Theorem 4]{Brd}, \req{asymp1} holds for some $\ga\in \BBR$. Under our assumptions $u$ is positive so that $\ga\geq 0$. By the remark in \cite[p.357]{Brd}, if $\ga=0$ then there exists $k>1$ \sth
$$\lim_{y_N\tin} \exp\big(\sqrt{\gl_k}y_N\big)u(y)=\gf_k(y')$$
where  $\gf_k$ an eigenfunction of $-\Gd_{y'}$ in $B_1^{N-1}$ corresponding to the $k$-th eigenvalue.
However this is impossible because $\gf_k$ changes signs.  Thus $\ga>0$.

Inequality \req{ga-est} is a consequence of \cite[Proposition 1]{Brd}.
\qed
\subsection{An estimate of $u_j$.}
We start by rescaling problem \req{bvpj}. Put
\begin{equation}\label{scale}\BAL
  y=x/r_j,\q  \tl u_j(y) = A_ju_j(r_j y),
\EAL\end{equation} where $A_j$ is given by \req{aj}. Then $v:=\tl
u_j$ is the solution of the problem

\begin{equation}\label{bvpv}\BAL
 -\Gd v+v^q&=0 &&\q\text{in } D_0,\\
 v(y)&=0 &&\q\text{for }y\in \prt D_0\,:\,0<y_N,\\
v(y',0)&=\tl\gg(y') &&\q\text{for } |y'|\leq 1,
\EAL\end{equation}
where
\begin{equation}\label{tlgg}
 \tl\gg(y'):=\begin{cases}\gf_1(2y') &\text{if }|y'|<\rec{2}\\
 0&\text{otherwise.}\end{cases}
\end{equation}

Applying Lemma \ref{Brada} to the solution $v$ of \req{bvpv} we obtain,
\begin{equation}\label{limvj}
  \lim_{y_N\tin}\exp(\sqrt{\gl_1}y_N) v(y',y_N)=\ga \gf_1(y')
\end{equation}
where $\ga$ is a positive number depending only on $q,N$. \Consy there exists  $\gb>0$ \sth
$$\BAL \frac{1}{2}\ga \gf_1(y')\exp(-\sqrt{\gl_1}y_N)&\leq  A_j u_j(r_jy) \\
&\leq 2 \ga \gf_1(y')\exp(-\sqrt{\gl_1}y_N) \forevery y_N\geq \gb,\; |y'|\leq 1.
\EAL$$
This inequality is equivalent to
\begin{equation}\label{uj>}\BAL
&\frac{\ga}{2A_j}  \gf_1(x'/r_j)\exp(-\sqrt{\gl_1}x_N/r_j)\leq u_j(x)\\
&\leq \frac{2\ga}{ A_j}  \gf_1(x'/r_j)\exp(-\sqrt{\gl_1}x_N/r_j)
 \forevery x_N\geq \gb r_j,\q |x'|\leq r_j.
\EAL\end{equation}
\subsection{Comparison of $u_j$ and $u_{j-1}$.} \
Let $\tau_{j}$ be the number determined by the equation,
\begin{equation}\label{tauj}\BAL
  \frac{\ga}{2}  \exp(-\sqrt{\gl_1}\tau_{j}/r_j)=&\Big(\frac{a_j}{a_{j-1}}\Big)^{\rec{q-1}}2^{-\frac{2}{q-1}}\\
   =& 2^{-\frac{2}{q-1}} \exp\frac{-\mu(r_j)+\mu(r_{j-1})}{q-1}
\EAL\end{equation}
By \req{aj} and \req{ggj}, this  is equivalent to
\begin{equation}\label{tauj'}
\frac{\ga}{2 A_j}  \gf_1(x'/r_j)\exp\Big(-\sqrt{\gl_1}\frac{\tau_{j}}{r_j}\Big)=\gg_{j-1}(x').
\end{equation}

Without loss of generality we may assume that \req{tech1'} holds for $a=2$. Therefore
 there exists $\kappa\in (0,1)$ \sth
\begin{equation}\label{wrj}
 \mu(r_j)-\mu(r_{j-1})\geq \kappa \mu(r_j).
\end{equation}
By \req{tauj},
$$\sqrt{\gl_1} \frac{\tau_j}{r_j} =\frac{\mu(r_j)-\mu(r_{j-1})}{q-1} +c(N,q).$$
Therefore, by \req{wrj} and \req{tech1}, there exist positive numbers $c_0,\, c_1$ and $j_0$ (depending only on $\kappa,\,N,\,q$) \sth
\begin{equation}\label{tauj<}
\gb r_j <c_0\gw(r_j)\leq \tau_{j}\leq c_1\gw(r_j)
\end{equation}
for every $j\geq j_0$ ($\gb$  as in \req{uj>}).

By \req{uj>}, \req{tauj'} and \req{tauj<}
\begin{equation}\label{ggj1}
 \gg_{j-1}(x')\leq u_j(x',\tau_{j}), \q |x'|\leq r_j, \;j\geq j_0.
\end{equation}
%We extend $ \gg_{j,1}$ to the whole of $\BBR^{N-1}$ by setting $ \gg_{j,1}(x')=0$ for $|x'|>r_j$.
\bcom@
Let $u_{j,1}$ be the solution of the problem
\begin{equation}\label{bvpj1}\BAL
 -\Gd u_{j,1} +a_{j-1}u_{j,1}^q&=0 &&\q\text{in }\Gw_j,\\
 u_{j,1}(x)&=0 &&\q\text{on }\{x\in \bdw_{j-1}: x_N>0\},\\
 u_{j,1}(x',0)&=\gg_{j,1}(x') &&\q\text{for } |x'|\leq r_{j-1},
\EAL\end{equation}
@\end{comment}
By  the maximum principle, \req{bvpj}, \req{ggj1} and the fact that $a_{j-1}>a_j$ imply
\begin{equation}\label{iterate0}
 u_{j-1}(x',x_N)\leq u_j(x',x_N+\tau_{j}) \forevery j\geq j_0,\; x\in \Gw_j.
\end{equation}

\subsection{Proof of Proposition \ref{uj_infty}.} \
Let $j_0\leq k<m$. Iterating inequality \req{iterate0} for $j=k+1,\ldots,m$ we obtain,
\begin{equation}\label{iterate1}
 u_{k}(x',x_N)\leq u_m(x',x_N+\sum_{j=k+1}^m\tau_{j}) \forevery   x\in \Gw_m.
\end{equation}
Combining this inequality (for $x'=x_N=0$) with  \req{uj>} yields
\begin{equation}\label{iterate3}
\rec{2}\ga(a_{k}r_{k}^2)^{-\rec{q-1}}=\frac{\ga}{ 2A_{k}}\leq u_{k}(0)\leq u_m(0,\sum_{j=k+1}^m\tau_{j}) \end{equation}
for every $m,k$ \sth $j_0\leq k<m$. By \req{nonDini},
$$\sum_{j=k}^\infty\gw(r_j)=\infty.$$
Therefore, by \req{tauj<}
\begin{equation}\label{sum_tau}
   \sum_{j=k}^\infty\tau_j=\infty.
\end{equation}
\Consy,
\begin{equation}\label{sm,k}
s_{m,k}:=\sum_{j=k+1}^m\tau_{j}\;\Lra\; \lim_{m\tin}s_{m,k}=\infty.
\end{equation}
Note that $a_kr_k^2\to 0$; therefore, by \req{iterate3}, for every $M>0$ there exists $j\ind{M}$ \sth
\begin{equation}\label{M<um}
  M<u_m(0,s_{m,k}) \q j\ind{M}\leq k<m.
\end{equation}

 We claim that
 \begin{equation}\label{U=infty}
 \sup u_j(0,x_N)=\infty\forevery x_N>0.
 \end{equation}
 By negation, assume that
$$\exists s>0\,:\;\sup u_j(0,s)= K<\infty.$$
By \req{uj>}
$$\frac{u_j(x',s)}{u_j(0,s)}\leq 4\ga \q |x'|\leq r_j.$$
Here we use the fact that $1=\gf(0)=\max\gf$. It follows that, for every $j$ \sth $2^j>\gb /s$,
$$\sup u_j(x',s)\le 4\ga K, \q |x'|\leq r_j. $$
Therefore, by the maximum principle, for every $j$ as above,
$$u_j(x',x_N)\le 4\ga K \forevery x\in \Gw_j\cap [x_N\geq s].$$
In view of \req{sm,k},
this contradicts \req{M<um}.
\qed

\subsection{Proof of Theorem \ref{Th.2}.} \

Let $P_0(x,y)=c_Nx_N|x-y|^{-N}$ be the Poisson kernel for $-\Gd$ in $\RN_+$.
  Condition \req{tech1''} implies that,  for any positive constants $a,R$
 \begin{equation}\label{tech3}
 \sup_{|x'|<R}|x'|^{-a}h(x)<\infty.
 \end{equation}
 For every $q>1$ choose $a>0$ \sth $q<(N+1+a)/(N-1)$. Then for every $R>0$,
  \begin{equation*}\label{subcr}
\int_{[|x|<R, \;0<x_N]} h(x)P_0^q(x,0))x_N dx<C_a\int_{[|x|<R, \;0<x_N]} |x|^a P_0^q(x,0))x_N dx <  \infty.
\end{equation*}
\Consy, for every $k>0$, the problem
$$\BAL -\Gd v+h(x) v^q&=0 &&\text{in }D_{0},\\
v&=0 && \text{on }\prt_\ell  D_0:= [|x'|=1,\;x_N>0],\\
v&=k\gd_0&&\text{on }[x_N=0]
\EAL$$
possesses a unique solution dominated by the supersolution $kP_0$ (see \cite{MVsub}).

The function
\begin{equation}\label{v0infty}
  v_{0,\infty}:=\lim_{k\tin} v_{0,k}\q \text{in }D_0
\end{equation}
is a solution of \req{eqh} in $D_0\cap[|x'|>0]$ but it  may blow up as $|x'|\to 0$.

Put
$$f(x_N)=\int_{|x'|<1}v_{0,\infty}(x',\bar x_N)dx'\forevery x_N>0.$$
If $f(a)<\infty$  for some $a>0$
 then $v_{0,\infty}$ is finite in $D_0\cap [x_N>a]$ so that $f(x_N)<\infty$ for every $x_N>a$. Thus
  \begin{equation}\label{fxN}
f(a)<\infty \;\text{ for some $a>0$ }\Lra  f(x_N)<\infty \forevery x_N\geq a.
 \end{equation}
Let
\begin{equation}\label{def_b}
  b=inf\{x_N>0: f(x_N)<\infty\}.
\end{equation}
By\req{fxN}
  \begin{equation}\label{fxN=infty}
f(x_N)=\infty\forevery x_N\in (0, b),\q f(x_N)<\infty \forevery x_N\in (b,\infty).
\end{equation}
We have to show that $b=\infty$. By negation assume that $b<\infty$. First consider the case $0<b$. Let $a\in (0,b)$ and put $\eta(x');=v_{0,\infty}(x',a)$. Then
$$\int_{|x'|<1}\vgf\eta\,dx'=\infty \forevery \vgf\in C([|x'|\leq 1] \q\text{\sth }\;\vgf(0)>0.$$
Thus the measure $\mu_\eta=\eta\,dx'$ is larger then $k\gd_0$ for every $k>0$. The function $V$ given by $V(x)=v_{0,\infty}(x',x_N+a)$ satisfies
$$\BAL -\Gd V+h(x) V^q&=0 &&\text{in }D_{0},\\
V&=0 && \text{on }\prt_\ell  D_0:= [|x'|=1,\;x_N>0],\\
V&=\eta &&\text{on }[x_N=0].
\EAL$$
Therefore $V\geq v_{0,\infty}$, i.e.,
$$v_{0,\infty}(x',x_N+a)\geq v_{0,\infty}(x',x_N).$$
But this implies
$$f(x_N+a)=\infty \forevery x_N\in (0,a+b)$$
which contradicts \req{def_b}.

Next assume that $b=0$. In this case,
\begin{equation}\label{v0<infty}
  v_{0,\infty}(0, x_N)<\infty \forevery x_N>0
\end{equation}
and \consy $v_{0,\infty}$ is a solution of \req{eqh} in $D_0$. Let $w_j$ be the unique solution of the \bvp:
\begin{equation}\label{bvp-aj}
\BAL
  -\Gd w_j+a_jw_j^q&=0, &&\text{in }\Gw_j\\
  w_j&=0 && \text{on } \prt \Gw_j\cap [x_N>0],\\
  w_j&=\infty\gd_0 && \text{on } [x_N=0].
  \EAL
\end{equation}
where $a_j=h(r_j)$. As usual, this means that $w_j=\lim_{k\tin} w_{j,k}$ where $w_{j,k}$ is the solution of the modified problem  where the boundary data on $x_N=0$ is $w_{j,k}(x',0)=k\gd_0$.
Since $a_j\geq h(x)$ in $\Gw_j$ it follows that
\begin{equation}\label{wj<v}
 w_j\leq v_{0,\infty} \q\text{in  }\Gw_j.
\end{equation}

The function $w^*_j$ given by $w^*_j(x):=A_jw_j(r_jx)$ for $x\in D_0$ is a solution of the problem:
\begin{equation}\label{bvp-q}
\BAL-\Gd w+w^q&=0 && \text{in }D_0\\
w&=0 &&\text{on }\prt_\ell D_0,\\
w(x',0)&=\infty\gd_0 &&\text{on }[x_N=0].
\EAL
\end{equation}
The solution of this problem is unique; \consy $w^*_j$ is independent of $j$ and we denote it by $w^*$.

Let $C:=\sup_{|x'|<1/2} w^*(x',1)$. Then
$w_j(y)=A_j^{-1}w^*(y/r_j)$ satisfies
$$ w_j(y',r_j)\geq cA_j^{-1}, \q |y'|<r_{j+1}.$$
As $\gg_j(x')=0$ for $|x'|>r_{j+1}$ it follows that
$$ w_j(y',r_j)\geq  c\gg_j(x'), \q |x'|<r_{j}.$$
Hence
$$w_j(x', x_N+r_j)\geq u_j(x) \q\text{in }\Gw_j.$$
Therefore, by  Proposition \ref{uj_infty},
$$\lim_{j\tin} w_j(0,x_N)=\infty \forevery x_N>0.$$
Hence, by \req{wj<v},
$$v_{0,\infty}(0,x_N)=\infty \forevery x_N>0$$ in contradiction to \req{v0<infty}.

\qed

\end{document}
%%%%%%%%%%